\theoremstyle{plain}
\newtheorem{theorem}{Theorem}[section]
\newtheorem{lemma}[theorem]{Lemma}
\newtheorem{corollary}[theorem]{Corollary}
\newtheorem{proposition}[theorem]{Proposition}
\newtheorem{conjecture}[theorem]{Conjecture}
\newtheorem*{conjecture*}{Conjecture}
\newtheorem*{theorem*}{Theorem}
\newtheorem*{claim*}{Claim}
\theoremstyle{definition}
\newtheorem{definition}[theorem]{Definition}
\newtheorem{remark}[theorem]{Remark}
\newcommand{\A}{\mathcal{A}}
\newcommand{\R}{\mathbb{R}}
\newcommand{\N}{\mathbb{N}}
\newcommand{\Z}{\mathbb{Z}}
\newcommand{\F}{\mathbb{F}}
\DeclareMathOperator{\TT}{TT}
\begin{document}

\title[Tverberg-type theorems for matroids]
{Tverberg-type theorems for matroids:\\A counterexample and a proof}

\author[Blagojevic]{Pavle V. M. Blagojevi\'{c}}
\thanks{P.V.M.B.\ received funding from DFG via the Collaborative Research Center TRR~109 ``Discretization in Geometry and Dynamics'' and the grant ON 174008 of the Serbian Ministry of Education and Science.} 
\address{Inst. Math., FU Berlin, Arnimallee 2, 14195 Berlin, Germany} 
\email{blagojevic@math.fu-berlin.de}
\author[Haase]{Albert Haase} 
\thanks{A.H. was supported by DFG via the Berlin Mathematical School.}
\address{Inst. Math., FU Berlin, Arnimallee 2, 14195 Berlin, Germany} 
\email{a.haase@fu-berlin.de}
\author[Ziegler]{G\"unter M. Ziegler} 
\thanks{G.M.Z.\ received funding from DFG via the Collaborative Research Center TRR~109 ``Discretization in Geometry and Dynamics''}
\address{Inst. Math., FU Berlin, Arnimallee 2, 14195 Berlin, Germany} 
\email{ziegler@math.fu-berlin.de}

\date{10. May 2017}
% -------------------------------------------------------------------%

\begin{abstract}
B\'ar\'any, Kalai, and Meshulam recently obtained a topological Tverberg-type theorem for matroids, which guarantees multiple coincidences for continuous maps from a matroid complex to~$\R^d$, if the matroid has sufficiently many disjoint bases. They make a conjecture on the connectivity of $k$-fold deleted joins of a matroid with many disjoint bases, which would yield a much tighter result -- but we provide a counterexample already for the case of $k=2$, where a tight Tverberg-type theorem would be a topological Radon theorem for matroids. Nevertheless, we prove the topological Radon theorem for the counterexample family of matroids by an index calculation, despite the failure of the connectivity-based approach.
\end{abstract}

\maketitle

\section{Introduction and statement of main results}
\label{sec:introduction}

Let $d\geq 1$ and $k\geq 1$ be integers and let $f\colon \Sigma \rightarrow \R^d$ be a continuous map from a non-trivial simplicial complex~$\Sigma$ to~$\R^d$. A \emph{Tverberg $k$-partition of~$f$} is a collection $\{\sigma_1, \dots, \sigma_k \}$ of $k$ pairwise disjoint faces of~$\Sigma$ such that $\bigcap_{i=1}^k f(\sigma_i) \neq \emptyset$. For fixed $d\geq 1$, the \emph{topological Tverberg number} $\TT(\Sigma,d)$ is the maximal integer~$k\geq 1$ such that every continuous map $f \colon \Sigma \rightarrow \R^d$ has a Tverberg $k$-partition. The topological Tverberg theorem due to B{\'{a}}r{\'{a}}ny, Shlosman, and Sz{\H{u}}cs~\cite{barany_shlosman_topol_tverberg1981} implies that, if~$\Sigma$ is the $d$-skeleton~$\Delta_{(k-1)(d+1)}^{(d)}$ of the simplex of dimension~$(k-1)(d+1)$ and~$k$ is prime, then $\TT(\Delta_{(k-1)(d+1)}^{(d)},d) = k$. For $k=2$ this result is equivalent to the topological Radon theorem~\cite{bajmoczy_barany_topo_radon1979}. It follows from the work of {\"{O}}zaydin \cite{ozaydin_1987} that this result remains true when~$k$ is a prime power. Recently Frick \cite{frick_counterexamples2015} \cite{frick_barycenters_and_counterexamples2015}, using the ``constraint method'' \cite{blagojevic_frick_ziegler_tverberg_plus_constraints2014} and building on work  by Mabillard and Wagner \cite{mabillard_wagner_elim_I_2015}, showed that if~$k\geq 6$ is not a prime-power and~$d\geq 3k+1$, then $\TT(\Delta_{(k-1)(d+1)},d) < k$; see~\cite{barany_blagojevic_ziegler_tverbergs_thm_at_50_2016} for a recent survey.

Recently by B{\'{a}}r{\'{a}}ny, Kalai, and Meshulam \cite{barany_kalai_meshulam2016} gave lower bounds for the topological Tverberg number of a matroid, regarded as the simplicial complex of its independent sets. Let~$\Sigma$ be a matroid~$M$ of rank~$d+1$ with~$b$ disjoint bases, then \cite[Thm.\,1]{barany_kalai_meshulam2016} asserts that $\TT(M,d) \geq \sqrt{b}/4$. If $M$ is the uniform matroid~$\Delta_{(k-1)(d+1)}^{(d)}$, then this result implies that $\TT(\Delta_{(k-1)(d+1)}^{(d)},d) \geq \sqrt{k-1}/4$ for all integers $d,k \geq1$.

The results of \cite{barany_shlosman_topol_tverberg1981}, \cite{ozaydin_1987}, and \cite[Thm.\,1]{barany_kalai_meshulam2016} mentioned above are all obtained by using a ``configuration space/test map scheme.'' This approach involves defining for each~$k\geq 1$ a space~$X$ related to the complex~$\Sigma$, called the \emph{configuration space}, and a space~$Y$ related to~$\R^d$, called the \emph{test space}, such that both spaces admit an action by a finite non-trivial group~$G$. A continuous map \emph{without} a Tverberg $k$-partition then defines a related $G$-equivariant map $X \rightarrow Y$, called the \emph{test map}. The method of proof is to show the non-existence of such a $G$-equivariant map~$X \rightarrow Y$. The configuration space test map scheme is a classical and frequently used tool to solve problems in combinatorics and discrete geometry; see Matou\v{s}ek \cite{matousek_using_BU_2nd_2003} for an introduction and a survey of applications.

There are two major types of configuration test map scheme that
have proved to be particularly powerful, based on
using joins resp.\ products as configuration spaces.
In the \emph{join scheme} used in \cite{sarkaria_a_generalized_vk_flores1991} and \cite{barany_kalai_meshulam2016} the configuration space~$X$ is the $k$-fold deleted join~$\Sigma^{*k}_\Delta$ of the complex~$\Sigma$ and the test space~$Y$ is a sphere~$S^{(k-1)(d+1) -1}$ of dimension~$(k-1)(d+1) -1$. In the \emph{product scheme} used in \cite{barany_shlosman_topol_tverberg1981} the configuration space~$X$ is the $k$-fold deleted product~$\Sigma^{\times k}_\Delta$ of the complex~$\Sigma$ and the test space~$Y$ is a sphere~$S^{(k-1)d}$. If~$k$ is prime, then all spaces and in particular the two spheres admit free actions by the group~$\Z/k$. 

In order to obtain sharp results using a configuration space/test map scheme it is necessary to determine proof strategies for the non-existence of an equivariant map from the configuration space~$X$ to the test space~$Y$. One commonly used method is the \emph{connectivity-based approach}, which can be applied if~$Y$ is a finite-dimensional CW complex on which the group acts freely: If one establishes that the connectivity of the space~$X$ is at least as high as the dimension of the space~$Y$, then Dold's theorem \cite{dold_simple_proofs_borsuk1983} implies that an equivariant map~$X\rightarrow Y$ does not exist. For a more general version of Dold's theorem that is also applicable in this context see~\cite{volovikov_on_a_top_general1996}.

The connectivity-based approach (for $k$ a prime power) yields tight bounds for the topological Tverberg number of~$\Sigma =\Delta_{(k-1)(d+1)}^{(d)}$ with both the product scheme \cite{barany_shlosman_topol_tverberg1981} and the join scheme \cite{sarkaria_a_generalized_vk_flores1991}. 
It is natural to consider the more general situation when~$\Sigma$ is a well-behaved simplicial complex, say a matroid~$M$: What is the connectivity of the configuration spaces? Which results can/\allowbreak cannot be obtained via a connectivity-based approach? Having these questions in mind, B{\'{a}}r{\'{a}}ny, Kalai, and Meshulam formulated the following conjecture.

\begin{conjecture}[{{B\'{a}r\'{a}ny}, Kalai, and Meshulam 2016 \cite[Conj.\,4]{barany_kalai_meshulam2016}}]\label{conj:bkm_barany_kalai}
For any integer~$k\geq1$ there exists an integer~$n_k \geq 1$ depending only on~$k$ such that for any matroid~$M$ of rank~$r\geq 1$ with at least~$n_k$ disjoint bases, the $k$-fold deleted join~$M^{*k}_\Delta$ of the matroid~$M$ is $(kr-1)$-dimensional and $(kr-2)$-connected.
\end{conjecture}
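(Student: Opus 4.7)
The dimension half is immediate. Every face of $M^{*k}_\Delta$ is a disjoint union of $k$ independent sets of $M$, hence has at most $kr$ vertices, while $k$ pairwise disjoint bases $B_1,\ldots,B_k$ of $M$ exhibit a face of dimension $kr-1$. So already $n_k \geq k$ forces $\dim M^{*k}_\Delta = kr-1$; the real content is the $(kr-2)$-connectivity.

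\textbf{Strategy via shellability.} The natural attack is to show that $M^{*k}_\Delta$ is pure shellable of dimension $kr-1$ once $M$ has sufficiently many disjoint bases. By a theorem of Bj\"orner, matroid complexes are pure shellable, and a shelling of $M$ induces one on the free $k$-fold join $M^{*k}$. The hope is that, after ordering the facets of $M^{*k}$ suitably, the facets that lie in the deleted join $M^{*k}_\Delta$ form a shellable subcomplex; pure shellability would then give the wedge-of-spheres conclusion, and in particular the $(kr-2)$-connectivity.

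\textbf{The critical purity step.} For this scheme to produce anything, one must first verify that every face $(\sigma_1,\ldots,\sigma_k)$ of $M^{*k}_\Delta$ extends to a facet of the form $(B_1,\ldots,B_k)$ with the $B_i$ pairwise disjoint bases of $M$ satisfying $\sigma_i\subseteq B_i$. This is a matroid partition problem: inside the deletion $M\setminus\bigcup_j\sigma_j$ one needs $k$ pairwise disjoint sets $\tau_i$ of prescribed sizes $r-|\sigma_i|$ such that each $\sigma_i\cup\tau_i$ is a basis of $M$. Edmonds' matroid partition theorem provides a rank-function criterion for exactly this kind of problem, and I would try to show that a large lower bound on the number of disjoint bases of $M$ translates into enough slack in Edmonds' inequality to ensure the extension, uniformly over all partial configurations $(\sigma_1,\ldots,\sigma_k)$. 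I would run this by induction on $\sum_i|\sigma_i|$, re-applying Edmonds after each extension step.

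\textbf{Main obstacle.} I expect the entire argument to stand or fall on this purity step. The count ``number of disjoint bases'' is a rather crude invariant of $M$: a small but badly placed partial configuration $(\sigma_1,\ldots,\sigma_k)$ can collapse local ranks in ways that no global bound on the base count obviously remedies, leaving a facet of dimension strictly less than $kr-1$. If purity fails, shellability fails, and the wedge-of-spheres route gives no access to $(kr-2)$-connectivity. A backup plan would be to exploit the projection $M^{*k}_\Delta \to M$ whose fibre over an independent set $\sigma$ is the chessboard complex $\Delta_{|\sigma|,k}$, and to glue the known high connectivity of chessboard complexes to the $(r-2)$-connectivity of $M$ via a nerve or spectral sequence argument; but assembling these local connectivities into the required global $(kr-2)$-connectivity seems to reintroduce a purity-like hypothesis, so I would not be surprised if this line of reasoning fails already at $k=2$.
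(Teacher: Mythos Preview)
Your instincts are correct, but your conclusion should be stronger: the statement you are trying to prove is \emph{false}, and the paper establishes exactly that. The ``statement'' is a conjecture, and the paper's contribution is a counterexample for $k=2$ (Theorem~1.2), not a proof. So no proof proposal can succeed.

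The failure occurs precisely at your ``critical purity step.'' The paper exhibits, for every $r\ge 2$, a rank-$r$ matroid $M_r$ with $r$ pairwise disjoint bases, namely the $(r-1)$-skeleton of $U_{1,r}^{\oplus(r-1)}\oplus U_{r,r}$, whose $2$-fold deleted join is \emph{not pure}. Concretely, take $\sigma_1=E_r=\{w_1,\dots,w_r\}$ (a basis of $M_r$) in the first copy. Then the second copy must avoid $E_r$ entirely, so $\sigma_2$ is constrained to the restriction $M_r|(E_1\cup\cdots\cup E_{r-1})$, which has rank only $r-1$. Hence $(\sigma_1,\sigma_2)$ can never be completed to a pair of disjoint bases, and $(M_r)^{*2}_\Delta$ has facets of dimension $2r-2$ alongside those of dimension $2r-1$. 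Your Edmonds-partition heuristic breaks here because the ``badly placed'' configuration $\sigma_1=E_r$ deletes an entire coordinate of the direct sum, and no amount of disjoint-basis slack (the number of disjoint bases is $r$, which goes to infinity with $r$) repairs the local rank collapse.

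The paper then shows, via a non-pure shelling of $(M_r)^{*2}_\Delta$ for $r\ge 3$, that its homotopy type is a wedge of spheres of dimensions $2r-1$ \emph{and} $2r-2$, with exactly $2(r-1)^{r-1}$ spheres in dimension $2r-2$. So $(M_r)^{*2}_\Delta$ is $(2r-3)$-connected but not $(2r-2)$-connected, refuting the conjecture for $k=2$. Your final sentence --- ``I would not be surprised if this line of reasoning fails already at $k=2$'' --- is thus the right diagnosis; what remained was to write down the matroid that witnesses it.
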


For~$k=1$ the conjecture is true, since a matroid of rank~$r$ is pure shellable and hence in particular $(r-2)$-connected \cite[Thm.\,4.1]{bjorner_wachs_I_nonpure_shellable1996}.  Using the connectivity-based approach the conjecture would imply that for a matroid~$M$ of rank~$d+1$ with~$b \geq n_k$ disjoint bases the topological Tverberg number satisfies~$\TT(M,d)\geq k$.

We prove the following theorem that gives a counterexample to the conjecture already in the case where~$k=2$. 

\begin{theorem}[\Cref{conj:bkm_barany_kalai} fails for $k=2$]\label{thm:bkm_main_counterexample}
There is a family of matroids $M_r\,(r \in \Z, r \geq 2)$ such that each matroid~$M_r$ has rank~$r$ and $r$~disjoint bases, while the $2$-fold deleted join~$(M_r)^{*2}_\Delta$ of~$M_r$ is is $(2r-1)$-dimensional and $(2r-3)$-connected, but not~$(2r-2)$-connected. 
\end{theorem}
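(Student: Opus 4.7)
The plan is to exhibit an explicit family $\{M_r\}_{r\ge 2}$ and to verify the three assertions by a direct topological analysis of the deleted join. I would define $M_r$ combinatorially on a ground set of size at least $r^2$ so that its rank is $r$ and it has $r$ pairwise disjoint bases $B_1,\dots,B_r$. The dimension claim is then immediate: the pair $(B_i,B_j)$ for $i\ne j$ gives a $(2r-1)$-dimensional face of $(M_r)^{*2}_\Delta$, and no higher-dimensional face can exist because a face is a disjoint pair of independent sets each of size at most $r$.

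For the lower bound on connectivity I would exploit the product identity
\[
(N_1\oplus N_2)^{*2}_\Delta \;=\; (N_1)^{*2}_\Delta * (N_2)^{*2}_\Delta
\]
together with the standard join-connectivity estimate $\operatorname{conn}(X*Y)\ge \operatorname{conn}(X)+\operatorname{conn}(Y)+2$, iterated along a direct-sum decomposition of $M_r$. As a fall-back, the shellability of matroid independence complexes \cite[Thm.\,4.1]{bjorner_wachs_I_nonpure_shellable1996} combined with a Nerve-lemma cover argument gives an alternative route to $(2r-3)$-connectivity.

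The main obstacle is to produce a non-vanishing class in $H_{2r-2}((M_r)^{*2}_\Delta;\Z)$, witnessing the failure of $(2r-2)$-connectivity. The guiding tool is the reduced K\"unneth formula for joins,
\[
\widetilde H_n(X*Y) \;\cong\; \bigoplus_{p+q=n-1}\widetilde H_p(X)\otimes \widetilde H_q(Y),
\]
applied iteratively to a direct-sum decomposition. The base case $r=2$ already fixes the template: taking $M_2 = U_{1,2}\oplus U_{1,2}$ gives $(M_2)^{*2}_\Delta \simeq S^0 * S^0 \simeq S^1$, which is non-empty (so $(-1)$-connected) but not $2$-connected.

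The subtlety for $r\ge 3$ is that the naive generalisations, such as the partition matroid $U_{1,r}^{\oplus r}$, yield deleted joins whose iterated K\"unneth expansion vanishes outside the top degree $2r-1$, so they produce wedges of $(2r-1)$-spheres and hence \emph{satisfy} the conjecture. The counterexample family must therefore be more carefully tuned: it should combine matroid factors whose deleted joins have homology spread across several degrees (in particular some factor with nontrivial $\widetilde H_0$, coming from a disconnected deleted join), while still ensuring the presence of $r$ pairwise disjoint bases. Identifying such a family, checking that the disjoint-basis count does not collapse, and certifying that a K\"unneth summand survives in the unexpected degree $2r-2$ is where I expect the substantive work to lie.
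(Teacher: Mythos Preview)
Your direct-sum/K\"unneth strategy cannot produce the required family. Already your base case fails: for $M_2=U_{1,2}\oplus U_{1,2}$ the deleted join is $\Delta_{2,2}*\Delta_{2,2}\simeq S^0*S^0\simeq S^1$, which is not $1$-connected, whereas the theorem demands $(2r-3)$-connectivity, i.e.\ simple connectivity when $r=2$. For $r\ge 3$ the obstruction is structural. If $M=N_1\oplus\cdots\oplus N_k$ has $r$ disjoint bases then so does every summand; iterated K\"unneth shows that a nonzero class in $\widetilde H_{2r-2}\big(M^{*2}_\Delta\big)$ forces some factor $N_i$ of rank $r_i<r$ to satisfy $\widetilde H_{2r_i-2}\big((N_i)^{*2}_\Delta\big)\neq 0$ while carrying at least $r>r_i$ disjoint bases---a strictly harder instance of the same problem. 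Chasing down to rank~$1$ gives $N_i\cong U_{1,n}$ with $n\ge r\ge 3$, and then $(U_{1,n})^{*2}_\Delta=\Delta_{2,n}$ is connected, so $\widetilde H_0=0$ and the recursion dies. Your closing paragraph in effect concedes that no family has been identified.

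The paper's $M_r$ is designed precisely to escape the product identity: it is the rank-$r$ \emph{truncation} of $U_{1,r}^{\oplus(r-1)}\oplus U_{r,r}$, so $(M_r)^{*2}_\Delta$ is not a join of smaller deleted joins and K\"unneth is unavailable. Instead the paper proves (via a lemma on balanced $b$-skeleta of shellable balanced complexes) that the non-pure complex $(M_r)^{*2}_\Delta$ is shellable for $r\ge 3$; the Bj\"orner--Wachs theory then gives its homotopy type as a wedge of spheres, and an $f$-triangle count yields $h_{2r-1}=2(r-1)^{r-1}\neq 0$, producing the $(2r-2)$-spheres that witness the failure of $(2r-2)$-connectivity.
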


The family of matroids $M_r$ ($r\geq 2)$ is a tight example for the failure of \Cref{conj:bkm_barany_kalai} in the sense that if we increase the number of bases from $r$ to $r+1$, then the $2$-fold deleted join of the new complex is $(2r-2)$-connected; see \Cref{cor:bkm_increase_num_bases_M_r}. 
To prove \Cref{thm:bkm_main_counterexample} we first show that the complex~$(M_r)^{*2}_\Delta$ is shellable for $r\geq 3$ using the notion of shellability for non-pure complexes due to Björner and Wachs \cite{bjorner_wachs_I_nonpure_shellable1996} \cite{bjorner_wachs_II_nonpure_shellable1997}; see \Cref{prop:bkm_shellability_M_r_2}. The crucial ingredient in the proof is  \Cref{prop:bkm_balanced_subcomplexes_shellable}, which shows that balanced subcomplexes of shellable balanced complexes are again shellable. The case $r=2$ is treated separately; see \Cref{rem:bkm_case_M_r_r=2}. We give a first proof of \Cref{thm:bkm_main_counterexample} by constructing a covering of~$(M_r)^{*2}_\Delta$ by two subcomplexes; see~\Cref{cor:bkm_htpy_type_M_r_2}. A second proof of \Cref{thm:bkm_main_counterexample} is a straightforward calculation involving only the combinatorics of~$(M_r)^{*2}_\Delta$; see \Cref{subsec:bkm_proof_main_thm_1}. This allows us to calculate the Betti numbers of~$(M_r)^{*2}_\Delta$; see \Cref{cor:bkm_betti_nums_M_r_2}.

Using the connectivity-based approach one obtains that $\TT(M_r,d) \geq 2$ when $2r-3 \geq d$; see \Cref{cor:bkm_radon_for_M_r_via_connectivity}.
However, despite the lower connectivity of the matroid~$M_r$ we still obtain a sharp topological Radon theorem for~$M_r$ by means of a Fadell--Husseini index argument that goes back to \cite[Thm.\,1]{blagojevic_blagojevic_mcclary_equi_tian_jordan_curve} and \cite[Thm.\,4.2]{blagojevic_lueck_ziegler_equiv_top_config_spaces2015}; for the classical reference regarding the Fadell--Husseini index see \cite{husseini_ideal_val_index1988}. Thus the following theorem is an example of a Tverberg-type result for a family of matroids that cannot be obtained via the connectivity-based approach.

\begin{theorem}[Topological Radon theorem for~$M_r$]\label{thm:bkm_radon_for_M_r}
Let $d\geq 1$ and $r\geq 3$ be integers such that $2r -2\geq  d$. Then the topological Tverberg number of the family of matroids~$M_r$ from \Cref{thm:bkm_main_counterexample} satisfies $\TT(M_r,d) \geq 2$.
\end{theorem}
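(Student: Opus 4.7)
The plan is to apply the $k=2$ join configuration space/test map scheme and to close the borderline case $d = 2r-2$, where Dold's theorem is one short of applying, by a Fadell--Husseini index calculation with $\F_2$-coefficients. For $d \leq 2r-3$ the $(2r-3)$-connectedness of $X := (M_r)^{*2}_\Delta$ supplied by \Cref{thm:bkm_main_counterexample} already forces $\TT(M_r,d) \geq 2$ through \Cref{cor:bkm_radon_for_M_r_via_connectivity}, so only $d = 2r-2$ remains. Assuming for contradiction that a continuous $f\colon M_r \to \R^{2r-2}$ admits no Tverberg $2$-partition, the join scheme produces a $\Z/2$-equivariant map $F\colon X \to S^{2r-2}$, with $\Z/2$ swapping the two copies on $X$ (a free action, since two disjoint joinable faces cannot coincide) and acting antipodally on $S^{2r-2}$. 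Writing $t$ for the polynomial generator of $H^*(\Z/2;\F_2) = \F_2[t]$, naturality of the index forces $(t^{2r-1}) = \mathrm{Index}_{\Z/2}(S^{2r-2};\F_2) \subseteq \mathrm{Index}_{\Z/2}(X;\F_2)$. Thus it suffices to show that the image of $t^{2r-1}$ in $H^{2r-1}_{\Z/2}(X;\F_2)\cong H^{2r-1}(X/(\Z/2);\F_2)$ is \emph{nonzero}.

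I would verify this nonvanishing via the Serre spectral sequence of the Borel fibration $X \hookrightarrow X_{\Z/2} \to B\Z/2$, with $E_2^{p,q} = H^p(\Z/2; H^q(X;\F_2))$. By \Cref{thm:bkm_main_counterexample}, $H^q(X;\F_2) = 0$ for $1 \leq q \leq 2r-3$ and for $q > 2r-1$, so only the three rows $q = 0,\,2r-2,\,2r-1$ can contribute. In particular $E_2^{2r-1,0} = \F_2 \cdot t^{2r-1}$; all outgoing differentials from this spot vanish for dimensional reasons, and a bidegree check shows that the \emph{only} potentially nonzero differential hitting $(2r-1,0)$ is the transgression
\[
d_{2r-1}\colon E_{2r-1}^{0,\,2r-2} = H^{2r-2}(X;\F_2)^{\Z/2} \longrightarrow E_{2r-1}^{2r-1,\,0} = \F_2\cdot t^{2r-1}.
\]
The whole argument therefore reduces to showing that this transgression is zero.

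The main obstacle is precisely this transgression step. To handle it, I would draw on the explicit description of $X$ available from \Cref{cor:bkm_htpy_type_M_r_2} and from the covering of $X$ by two subcomplexes used in the first proof of \Cref{thm:bkm_main_counterexample}; these pin down $H^{2r-2}(X;\F_2)$ together with the swap action and furnish an equivariant CW model amenable to direct computation. Following the index technique of \cite{blagojevic_blagojevic_mcclary_equi_tian_jordan_curve} and \cite{blagojevic_lueck_ziegler_equiv_top_config_spaces2015}, the cleanest plan is to construct a $\Z/2$-equivariant map $\iota\colon S^{2r-1} \to X$ with antipodal action on the sphere, built from the deleted joins of pairs of disjoint bases of $M_r$. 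Such a map induces an $\F_2[t]$-algebra morphism $H^*(X/(\Z/2);\F_2) \to H^*(\RP^{2r-1};\F_2) = \F_2[t]/(t^{2r})$ sending $t^{2r-1}$ to the nonzero top class of $\RP^{2r-1}$, and therefore detects $t^{2r-1}$ directly in $H^{2r-1}(X/(\Z/2);\F_2)$, bypassing the need to compute $d_{2r-1}$ at all. The delicate point is to construct $\iota$ while honouring the matroid constraint on which subsets are admissible in the deleted join, and this is where the specific structure of the family $(M_r)$, in particular the existence of $r$ disjoint bases from \Cref{thm:bkm_main_counterexample}, is decisive.
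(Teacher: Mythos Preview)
Your setup matches the paper exactly: reduce to $d=2r-2$, run the join scheme, and show $t^{2r-1}\neq 0$ in $H_{\Z/2}^{2r-1}(X;\F_2)$ via the Serre spectral sequence of the Borel fibration, where the only dangerous differential is the transgression $d_{2r-1}\colon H^{2r-2}(X;\F_2)^{\Z/2}\to\F_2\cdot t^{2r-1}$.

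Where you diverge is in how this transgression is handled. The paper does \emph{not} construct an equivariant map $S^{2r-1}\to X$. It uses the covering $\{\Sigma_{2r-1},\Sigma_{2r-2}^1\cup\Sigma_{2r-2}^2\}$ together with \Cref{prop:bkm_hpty_type_covering_M_r_2} and Mayer--Vietoris to get
\[
H^{2r-2}(X;\F_2)\;\cong\;H^{2r-3}\big(\Sigma_{2r-1}\cap\Sigma_{2r-2}^1;\F_2\big)\ \oplus\ H^{2r-3}\big(\Sigma_{2r-1}\cap\Sigma_{2r-2}^2;\F_2\big),
\]
with the involution swapping the two summands. Hence $H^{2r-2}(X;\F_2)$ is a \emph{free} $\F_2[\Z/2]$-module, which is exactly the hypothesis of \cite[Thm.\,1]{blagojevic_blagojevic_mcclary_equi_tian_jordan_curve}. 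In your spectral-sequence language, freeness gives $E_2^{p,\,2r-2}=H^p\big(\Z/2;H^{2r-2}(X)\big)=0$ for all $p\geq 1$; then $\F_2[t]$-linearity of $d_{2r-1}$ yields $t\cdot d_{2r-1}(x)=d_{2r-1}(t\cdot x)=0$ in $E_{2r-1}^{2r,0}=\F_2\cdot t^{2r}$, forcing $d_{2r-1}(x)=0$. So the covering you mention in passing is precisely what finishes the argument---no auxiliary sphere map is needed.

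Your alternative via $\iota\colon S^{2r-1}\to X$ has a genuine gap. The phrase ``deleted joins of pairs of disjoint bases'' does not produce such a sphere: for disjoint bases $B_1,B_2$ the two top simplices $B_1\sqcup B_2$ and $B_2\sqcup B_1$ in $X$ meet only in the empty face, so their union is two disjoint balls, not a sphere; the $2$-fold deleted join of the full simplex on $B_1\cup B_2$ (which \emph{would} be $S^{2r-1}$) is not a subcomplex of $X$, since $|B_1\cup B_2|=2r>r$ violates the rank constraint of $M_r$; and the deleted join of a single basis only yields $S^{r-1}$. One can obtain $\iota$ abstractly by equivariant obstruction theory into the $(2r-2)$-connected $\Z/2$-invariant subcomplex $\Sigma_{2r-1}$ (or into $\Delta_{2,r}^{*r}$), but that already presupposes \Cref{prop:bkm_hpty_type_covering_M_r_2} and is strictly more work than the freeness-via-Mayer--Vietoris route.
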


We summarize the remaining results of this chapter as follows.
\begin{compactitem}
\item We show that \cite[Cor.\,3]{barany_kalai_meshulam2016} in fact implies lower bounds for the topological Tverberg number~$\TT(M,d)$ for matroids~$M$ of all ranks; see \Cref{cor:lower_bounds_TT_num}.
\item We give upper bounds for the topological Tverberg number~$\TT(M,d)$ in the case where the rank~$r$ of the matroid~$M$ is at most $d-2$; see \Cref{prop:bkm_upper_bounds_TT_num}.
\item We show that the connectivity of the $k$-fold deleted product~$M^{\times k}_\Delta$ of a matroid~$M$ of rank~$r$ with $b$~disjoint bases is at least $r-2 - \lfloor r(k-1)/b \rfloor$, when $k\geq 2$ and $b,r\geq k$. If $b \geq r(k-1)+1$, then~$M^{\times k}_\Delta$ is not $(r-1)$-connected; see \Cref{thm:bkm_conn_del_prod}.
\item Using \Cref{thm:bkm_conn_del_prod} we establish the connectivity of the ordered configuration space of two particles in a matroid; see \Cref{cor:bkm_conn_config_space}. 
\end{compactitem}

\section{Preliminaries}
\label{sec:bkm_prelims}

\subsection{Terminology}
\label{subsec:bkm_terminology}

By a \emph{simplicial complex} or simply \emph{complex} we refer to a finite abstract simplicial complex or a geometric realization of a finite abstract simplicial complex. We require that any complex contains the empty set as a face of dimension~$-1$. A \emph{facet} of a complex is a face that is not contained in any other  face.  Let $\Sigma_1, \dots, \Sigma_k$  be simplicial complexes with  vertex sets $V_1, \dots, V_k$. Then the \emph{join} of  the~$\Sigma_i$ is defined as the simplicial complex $\Sigma_1 * \dots * \Sigma_k = \{ \sigma_1 \sqcup \dots \sqcup \sigma_k \, \colon \,\sigma_i \in \Sigma_i\}$ with  vertex set equal to the disjoint union~$\bigsqcup_{i=1}^k V_i$. Assume the vertex sets~$V_i$ are all contained in a common set~$V$, then the  \emph{deleted join} of the~$\Sigma_i$ is defined as the simplicial complex $(\Sigma_1 * \dots *\Sigma_k)_\Delta = \{\sigma_1\sqcup  \dots \sqcup \sigma_k \, :\, \sigma_i \in \Sigma_i, \, \sigma_i \cap \sigma_j = \emptyset \text{ for } i\neq j\}$ with vertex set  $\bigsqcup_{i=1}^k V_i$. Let  $\Sigma_i=\Sigma$ for $i=1,\dots,k$. Then $\Sigma^{*k}:=\Sigma_1 * \dots *\Sigma_k$ is the \emph{$k$-fold join} of~$\Sigma$ and $\Sigma_\Delta^{*k}:=(\Sigma_1* \dots * \Sigma_k)_\Delta$ is the \emph{$k$-fold deleted join} of~$\Sigma$. If $\sigma \subseteq V$, the \emph{deletion} of $\sigma$ from $\Sigma$ is defined as $\Sigma\setminus \sigma = \{ \tau \in \Sigma \, \colon \, \sigma \not\subseteq \tau\}$. We also denote $\Sigma\setminus \sigma$ by $\Sigma | (V\setminus \sigma)$ and refer to it as the \emph{restriction} of $\Sigma$ to the set $V\setminus \sigma$. The \emph{link} of $\Sigma$ with respect to a face $\sigma\in \Sigma$ is defined as $\Sigma/\sigma=\{ \tau \in \Sigma \, \colon \, \sigma \cap \tau = \emptyset, \sigma \cup \tau \in \Sigma\}$. Given a geometric simplicial complex~$\Sigma$, we define the \emph{$k$-fold deleted product} $\Sigma^{\times k}_\Delta$ of~$\Sigma$ as the CW complex  with cells given by  products of relative interiors of (geometric) simplices~$\sigma_i \in \Sigma$  of the form $\mathrm{relint}(\sigma_1)\times \dots \times \mathrm{relint}(\sigma_k)$, where $\sigma_i \cap \sigma_j = \emptyset$ for all $i,j$ with $1\le i<j\le k$. The attaching maps for $\Sigma^{\times k}_\Delta$ are given by the products of the attaching maps of~$\Sigma$. For additional terminology and results regarding simplicial complexes see Matou\v{s}ek \cite{matousek_using_BU_2nd_2003}. 
  
A \emph{matroid}~$M$ with \emph{ground set}~$E$ is a simplicial complex with vertices in~$E$ such that for every $A\subseteq E$ the restriction $M|A = \{\sigma \in M \colon \sigma \subseteq A\}$ is pure. We  call a face of~$M$ an~\emph{independent set}, a facet of~$M$ a \emph{basis}, and  the cardinality of a (any) basis the \emph{rank} of~$M$.
Let~$m$ and~$n$ be integers with $0 \le m\le n$. Given a ground set~$E$ of cardinality~$n$, the \emph{uniform matroid $U_{m,n}(E)$} is given by the collection of all subsets of~$E$ of cardinality at most~$m$. Let~$\Delta_{n-1}^{(m-1)}$ be $(m-1)$-skeleton of the simplex of dimension~$n-1$. Then we have $\Delta_{n-1}^{(m-1)}=U_{m,n}(E)$. Given matroids~$M_1, \dots, M_k$ with ground sets~$E_1, \dots, E_k$, the \emph{direct sum} $M_1 \oplus  \dots \oplus M_k$ of the family~$M_i$ is defined as the collection $\{ I_1 \sqcup  \dots \sqcup I_k \,\colon \, I_i \in M_i \}$ and is a matroid with ground set~$E_1\sqcup \dots \sqcup E_k$. The direct sum of a collection of matroids is equal to the join of the collection of matroids, viewed as simplicial complexes. For additional terminology and results regarding matroids see Oxley \cite{oxley_matroid_theory_2011}.

\subsection{Non-pure shellability}
\label{subsec:bkm_non_pure_shellability}
Since some of the complexes we are interested in are non-pure, we use the notions of ``non-pure shellability'' introduced by Bj\"orner and Wachs \cite{bjorner_wachs_I_nonpure_shellable1996} \cite{bjorner_wachs_II_nonpure_shellable1997}.

By \cite[Def.\,2.1]{bjorner_wachs_I_nonpure_shellable1996} a \emph{shelling} of a possibly non-pure finite simplicial complex~$\Sigma$ of dimension~$d$ is defined as a strict order~``$\ll$'' on the set $\mathcal{F}$ of facets of~$\Sigma$ such that for any facet $B\in \mathcal{F}$~of dimension~$d'\le d$ for which there exists a prior facet $A \in \mathcal{F}$ with $A \ll B$, the simplicial complex 
\[
B\cap \big(\bigcup_{A\in \mathcal{F},\, A \ll B} A\; \big) 
\] 
defined by the intersection of~$B$ with the union of the previous facets (and their faces) is pure and $(d'-1)$-dimensional. This is equivalent to the following condition. For any two facets $A, B \in \mathcal{F}$ with $A\ll B$, there is a facet $C \in \mathcal{F}$ and a vertex $x \in B$ such that 
\begin{equation}
C \ll B \quad \text{and} \quad A \cap B \subseteq B\cap C = B \setminus \{v\}. \label{eq:bkm_shelling_condition}
\end{equation}
For pure complexes~$\Sigma$, the above definition coincides with the ``usual'' definition of shellability. A $d$-dimensional simplicial complex~$\Sigma$ is \emph{shellable} if it has a shelling. It is \emph{pure shellable} if it is pure and shellable. 

\section{Proof of the main result}
\label{sec:bkm_proof_main_result} 
\subsection{The counterexample family $M_r$}
\label{subsec:bkm_counterexample_intro}
\begin{definition}[The counterexample family~$M_r$] \label{def:bkm_matroid_Mr}
Let $r \geq 2$ be an integer. Let~$E$ be a set of pairwise distinct elements $v_i^j$ and~$w_j$ for $i=1,\dots,r-1$ and~$j=1,\dots,r$. Define \emph{blocks}~$E_i$ by
\[
E_i = \{v_i^1, \dots, v_i^{r}\} \quad \text{for} \quad i =1, \dots, r-1, \quad \text{and} \quad E_r = \{w_1, \dots, w_r\}.
\]
Define a matroid~$\widehat{M_r}$ by
\[
\widehat{M_r} = U_{1,r} (E_1) \oplus  \dots \oplus U_{1,r} (E_{r-1}) \oplus U_{r,r} (E_r).
\]
Then the matroid~$M_r$ with ground set~$E$ is defined as the $(r-1)$-skeleton of~$\widehat{M_r}$, 
hence \\ $M_r = \{I \in \widehat{M_r}\, \colon \, |I| \le r\}.$
\end{definition}

The matroid~$M_r$ has rank $r$ and has~$r$ pairwise disjoint bases of the form $\{v_1^j,\dots, v_{r-1}^j,w_j\}$ for $j=1,\dots,r$.  Faces of $M_r$ are given by choosing at most~$r$ vertices in total and at most~$1$~vertex in each of the first $r-1$~blocks; see \Cref{fig:bkm_matroid_mr}.

\begin{figure}[h!]
\begin{center}
\includegraphics[scale=.87]{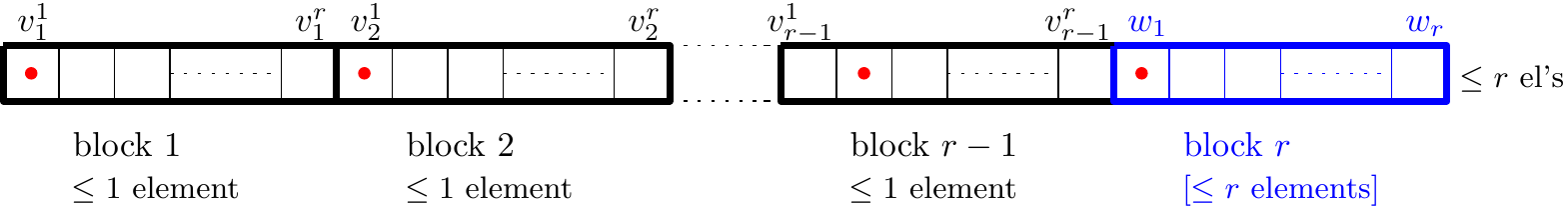}
\caption{The matroid~$M_r$.}
\label{fig:bkm_matroid_mr}
\end{center}
\end{figure} 

\begin{figure}[h!]
\begin{center}
\includegraphics[scale=.87]{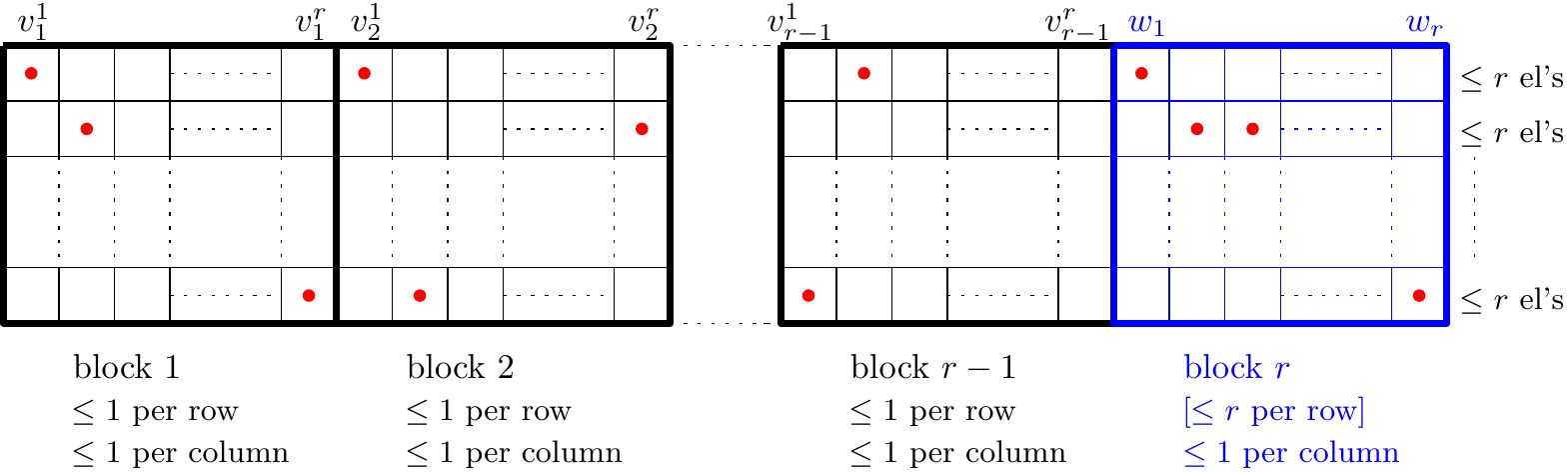}
\caption{The $k$-fold deleted join $(M_r)^{*k}_\Delta$ with an example facet.}
\label{fig:bkm_matroid_deljoin_mr}
\end{center}
\end{figure}
 
Consider the $k$-wise deleted join of the complex~$M_r$, which we denote by~$(M_r)^{*k}_\Delta$. We display the vertices of~$(M_r)^{*k}_\Delta$ in~$k$ \emph{rows}, based on the copy of~$M_r$ they belong to. We group the vertices of~$(M_r)^{*k}_\Delta$ into~$r$ \emph{blocks}; see \Cref{fig:bkm_matroid_deljoin_mr}. A \emph{column} of~$(M_r)^{*k}_\Delta$ consists of the $k$~copies of a fixed vertex $v\in E$. Faces of~$(M_r)^{*k}_\Delta$ are given by choosing at most~$r$~vertices in each row, at most~$1$ vertex per column and at most~$1$~vertex in each row of each of the first $r-1$~blocks.  Note that~$(M_r)^{*k}_\Delta$ has dimension $d=2r-1$ and is not pure: Its facets have dimensions $d,d-1,\dots, d-k+1$. See \Cref{fig:bkm_matroid_deljoin_m5_8_facet} for an example facet of dimension $d-1=8$ for~$r=5$ and~$k=2$.

\begin{figure}[h!]
\begin{center}
\includegraphics[scale=.91]{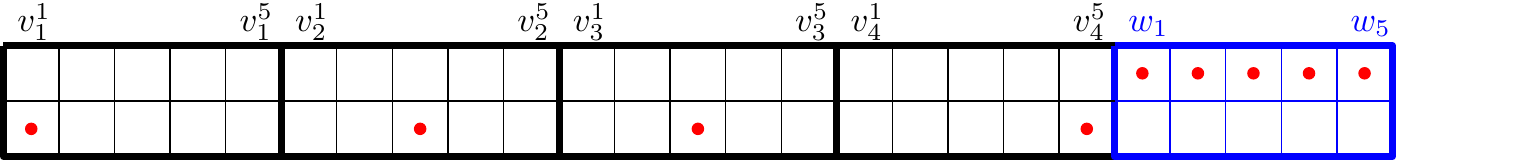}
\caption{An $8$-dimensional facet of the $9$-dimensional complex~$(M_5)^{*2}_\Delta$.}
\label{fig:bkm_matroid_deljoin_m5_8_facet}
\end{center}
\end{figure}

\subsection{Shellability of subcomplexes of balanced complexes}
\label{subsec:bkm_shellability_subcomplexes_balanced}

To define a shelling of~$(M_r)^{*2}_\Delta$ we use the existence of pure shellings of certain pure subcomplexes that we can describe as ``balanced complexes.'' Let us recall the definition of a balanced complex.

\begin{definition}[Stanley {\cite[Sec.\,2]{stanley_balanced_cohen_macaulay1979}}]\label{def:bkm_balanced_complex}
Let~$m\geq 1$  and~$d\geq 0$ be integers and let $a=(a_1,\dots, a_m)$ be an $m$-tuple of non-negative integers such that $a_1 +\dots + a_m = d+1$. Let~$\Sigma$ be a $d$-dimensional simplicial complex with vertex set~$V$. Let $\mathcal{V}:=(V_1, \dots, V_m)$ be an ordered partition of~$V$ into pairwise disjoint sets~$V_i$, called a  \emph{vertex coloring}. We call~$\Sigma$ a \emph{balanced complex (of type $a$ with respect to the partition~$\mathcal{V}$)} if
\begin{compactenum}[(i)]
\item $\Sigma$ is pure, and
\item for every facet $A\in \Sigma$ we have that $|A\cap V_i |= a_i$ for $i=1,\dots, m$.
\end{compactenum}
We call $\Sigma$ \emph{completely balanced} if it is balanced of type $(1, \dots, 1)$. 
\end{definition}

For example, the order complex of any graded poset is completely balanced. A simplicial complex is pure if and only if it is balanced of type $a=(a_1)$. Balanced complexes  were introduced by Stanley in 1979 \cite{stanley_balanced_cohen_macaulay1979}. They have been studied in the context of posets \cite{baclawski_CM_ordered_sets1980} \cite{bjorner_wachs_on_lex_shell_posets1983}, simplicial polytopes  \cite{goff_Klee_Novik_Balanced_complexes_without_large2011} \cite{juhnke-kubitzke_murai_balanced_generalized_lower_bound2015}, and Cohen-Macaulay or shellable complexes \cite{bjorner_wachs_I_nonpure_shellable1996} \cite{mnukhin_saturated_balanced2005}. We point out that some authors use ``balanced complex'' to refer to a ``completely balanced complex.'' 

Balanced complexes are not necessarily pure shellable, as can be seen by taking any pure non-shellable $d$-dimensional complex. Given a balanced complex~$\Sigma$ consider its \emph{type-selected subcomplex}~$\Sigma_T$ \cite[p.\,1858]{bjorner_topological_methods_chapter1995}, which is the restriction of~$\Sigma$ to the set~$\bigcup_{i \in T} V_i$ of vertices whose types (colors) are contained in the set~$T \subseteq \{1,\dots, m\}$. It was shown in \cite{bjorner_shellable_CM_posets1980} that any type-selected subcomplex of a pure shellable complex is pure shellable; see \cite[Thm.\,11.13]{bjorner_topological_methods_chapter1995} for a more general result. However, we are interested in the pure shellability of the following subcomplex.

\begin{definition}[Balanced $b$-skeleton]\label{def:bkm_balanced_k_skeleton}
Let $m\geq 1$ be an integer and let~$\Sigma$ be a balanced $d$-complex of type $a=(a_1, \dots, a_m)$ with vertex coloring $(V_1,\dots, V_m)$ and let $b = (b_1, \dots ,b_m)$ be an $m$-tuple of integers with $0 \le b_i \le a_i$ for $i=1, \dots, m$. Then the complex~$\Sigma^b$ given by the faces~$F$ of~$\Sigma$ for which $|F\cap V_i| \le b_i$ for $i=1,\dots, m$ is the \emph{balanced $b$-skeleton of~$\Sigma$}.
\end{definition}

To show that balanced $b$-skeleta of pure shellable balanced complexes are pure shellable (\Cref{prop:bkm_balanced_subcomplexes_shellable}) we use the existence of shellings of the skeleta of a shellable complex that are ``compatible'' with the  original shelling of the complex. Let us formulate this as a definition.

\begin{definition}\label{def:bkm_compatible_shelling}
Let~$\Sigma$ be a shellable simplicial complex of dimension~$d\geq 0$ with shelling order~``$\ll$.'' Let~$k$ be an integer with~$0 \le k \le d$. Then a shelling~$\ll'$ of the $k$-skeleton of~$\Sigma$ is  \emph{compatible with the shelling of~$\Sigma$} if the following implication holds for any two $k$-faces~$\overline{A}$ and $\overline{B}$ of~$\Sigma$: If $\overline{A} \ll'\overline{B}$ and if~$A$ and~$B$ are the smallest (w.r.t.~``$\ll$'') $d$-faces of~$\Sigma$ containing~$\overline{A}$ and $\overline{B}$, respectively, then $A\ll B$ or $A=B$.
\end{definition}

The following lemma is true even for shellable complexes that are non-pure. We apply it only in the pure setting.

\begin{lemma}[Bj\"orner and Wachs {\cite[Thm.\,2.9]{bjorner_wachs_I_nonpure_shellable1996}}]\label{lem:bkm_compatible_shellings_exist}
Let~$\Sigma$ be a shellable simplicial complex of dimension~$d$. Let~$k$ be an integer with $0\le k\le d$. Then there exists a shelling of the $k$-skeleton of~$\Sigma$ that is compatible with the shelling of~$\Sigma$.
\end{lemma}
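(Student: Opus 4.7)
The plan is to build the shelling order on the $k$-skeleton $\Sigma^{(k)}$ directly from a given shelling $F_1\ll F_2\ll\cdots\ll F_n$ of $\Sigma$. For each $k$-face $\overline{A}$ I set $\mathrm{first}(\overline{A})$ to be the least index $i$ with $\overline{A}\subseteq F_i$, and I order the $k$-faces primarily by the value of $\mathrm{first}$. Compatibility in the sense of \Cref{def:bkm_compatible_shelling} is then automatic, since $F_{\mathrm{first}(\overline{A})}$ is by construction the earliest $d$-face of $\Sigma$ containing $\overline{A}$.

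The combinatorial backbone of the construction is the standard \emph{restriction} of a facet: set
\[
\mathcal{R}_i \;:=\; \{\, v\in F_i : F_i\setminus\{v\}\subseteq F_j \text{ for some } j<i\,\}.
\]
A short consequence of~\eqref{eq:bkm_shelling_condition} applied inside $\Sigma$ is that a face $\overline{A}\subseteq F_i$ satisfies $\mathrm{first}(\overline{A})=i$ exactly when $\mathcal{R}_i\subseteq\overline{A}$. Hence the $k$-faces in ``group'' $i$ are precisely the top-dimensional faces of the join of $\mathcal{R}_i$ with the $(k-|\mathcal{R}_i|)$-skeleton of the simplex on $F_i\setminus\mathcal{R}_i$. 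Skeleta of simplices are pure shellable, so I fix an arbitrary such shelling $\ll'_i$ of each non-empty group and define $\ll'$ to be the concatenation of the $\ll'_i$ in the order of increasing $\mathrm{first}$.

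It remains to verify condition~\eqref{eq:bkm_shelling_condition} for $\ll'$. Fix $\overline{A}\ll'\overline{B}$ and set $i:=\mathrm{first}(\overline{B})$, $j:=\mathrm{first}(\overline{A})$. If $j=i$, the within-group shelling $\ll'_i$ directly supplies the required $\overline{C}$ and vertex. If $j<i$, I apply~\eqref{eq:bkm_shelling_condition} to the pair $F_j\ll F_i$ in the shelling of $\Sigma$ to obtain a facet $F_\ell\ll F_i$ and a vertex $v\in F_i$ with $F_j\cap F_i\subseteq F_i\cap F_\ell=F_i\setminus\{v\}$. By the definition of $\mathcal{R}_i$ this vertex lies in $\mathcal{R}_i\subseteq\overline{B}$, so $\overline{A}\cap\overline{B}\subseteq F_j\cap F_i\subseteq F_i\setminus\{v\}$; intersecting with $\overline{B}\subseteq F_i$ gives $\overline{A}\cap\overline{B}\subseteq\overline{B}\setminus\{v\}$. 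Extending $\overline{B}\setminus\{v\}$ by any vertex of $F_\ell\setminus\overline{B}$ then produces a $k$-face $\overline{C}\subseteq F_\ell$ with $\mathrm{first}(\overline{C})\le\ell<i$ and $\overline{B}\cap\overline{C}=\overline{B}\setminus\{v\}$, as required.

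The main obstacle I expect is the bookkeeping in the cross-group case: I must check that $v$ really lies in $\mathcal{R}_i$ (which is immediate from $F_i\setminus\{v\}=F_i\cap F_\ell\subseteq F_\ell$ with $\ell<i$) and that $F_\ell$ contains a vertex outside $\overline{B}$ to extend $\overline{B}\setminus\{v\}$ to a $k$-face. In the pure setting that is actually used in this paper one has $|F_\ell|=d+1\ge k+2$ whenever $k<d$, while the boundary case $k=d$ is vacuous since $\Sigma^{(d)}=\Sigma$; this disposes of the remaining technicalities.
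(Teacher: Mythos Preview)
Your argument is correct and is essentially the construction that Bj\"orner and Wachs give; note that the paper itself does not prove this lemma but merely cites \cite[Thm.\,2.9]{bjorner_wachs_I_nonpure_shellable1996}, so there is no in-paper proof to compare against. Your restriction-face bookkeeping (the equivalence $\mathrm{first}(\overline{A})=i \Leftrightarrow \mathcal{R}_i\subseteq\overline{A}$) and the cross-group verification are the standard steps, and your observation that $v\notin F_\ell$ (else $\overline{B}\subseteq F_\ell$ would contradict minimality of $i$) together with $|F_\ell|=d+1>k$ in the pure case cleanly handles the one delicate point. Since the paper explicitly states that it applies the lemma only in the pure setting, your restriction to that case is entirely adequate; the full non-pure statement would additionally require treating facets of $\Sigma$ of dimension $<k$ as facets of the $k$-skeleton, but this is not needed here.
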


We point out that the property of compatibility is transitive in the following sense. Let~$\Sigma$ be a shellable complex and let the shelling of its $k$-skeleton be compatible with the shelling of~$\Sigma$. Then any shelling of its $(k-1)$-skeleton that is compatible with the shelling of its $k$-skeleton is also compatible with the shelling of~$\Sigma$. 

\begin{proposition}\label{prop:bkm_balanced_subcomplexes_shellable}
Let~$m\geq 1$ be an integer and let~$\Sigma$ be a balanced $d$-complex of type $a=(a_1, \dots, a_m)$ with vertex coloring $(V_1,\dots, V_m)$ and let $b = (b_1, \dots ,b_m)$ be an $m$-tuple of non-negative integers with $0 \le b_i \le a_i$ for $i=1, \dots, m$. If~$\Sigma$ is pure shellable, then the balanced $b$-skeleton~$\Sigma^b$ of~$\Sigma$ is  pure shellable.
\end{proposition}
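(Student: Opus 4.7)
I plan to argue by induction on $N := \sum_{i=1}^m (a_i - b_i) \ge 0$. The base case $N = 0$ is immediate since then $b = a$ and $\Sigma^b = \Sigma$. For $N \ge 1$, choose an index $j$ with $b_j < a_j$ and let $b^+$ agree with $b$ outside coordinate $j$ with $b^+_j = b_j + 1$. By the inductive hypothesis applied to $b^+$, the complex $\Sigma^{b^+}$ is pure shellable; one checks directly that $\Sigma^{b^+}$ is also balanced of type $b^+$ with the same vertex coloring, and that $\Sigma^b$ is the balanced $b$-skeleton of $\Sigma^{b^+}$ in the sense of \Cref{def:bkm_balanced_k_skeleton}. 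After relabeling, this reduces the entire statement to the single-step case in which $\Sigma$ itself is pure shellable and balanced of type $a$ with $a_j \ge 1$ and $b = (a_1,\dots,a_{j-1},a_j-1,a_{j+1},\dots,a_m)$.

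In this single-step setting the facets of $\Sigma^b$ are exactly the sets $A \setminus \{v\}$ for $A$ a facet of $\Sigma$ and $v \in A \cap V_j$, each of which has color profile $b$, so $\Sigma^b$ is automatically pure of dimension $a_1+\dots+a_m-2$. Fix a shelling $A_1 \ll \dots \ll A_N$ of $\Sigma$ and an arbitrary linear order on $V_j$. To each facet $F$ of $\Sigma^b$ I assign the \emph{carrier} $\kappa(F) := \min\{i : F \subseteq A_i\}$, and note that $F = A_{\kappa(F)} \setminus \{v(F)\}$ for a unique $v(F) \in V_j$. Define $F \ll' F'$ iff $\kappa(F) < \kappa(F')$, or $\kappa(F) = \kappa(F')$ and $v(F) < v(F')$. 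To check~\eqref{eq:bkm_shelling_condition} for a pair $F_1 \ll' F_2$: if their carriers coincide, take $F_3 := F_1$. Otherwise let $A := A_{\kappa(F_2)}$, $B := A_{\kappa(F_1)}$, and apply~\eqref{eq:bkm_shelling_condition} in $\Sigma$ to the pair $B \ll A$ to obtain $C \ll A$ and $v \in A$ with $B \cap A \subseteq A \cap C = A \setminus \{v\}$, observing $v \ne v(F_2)$ (otherwise $F_2 \subseteq C$ would contradict minimality of $\kappa(F_2)$). If $v \in V_j$, set $F_3 := A \setminus \{v\}$; if $v \notin V_j$, set $F_3 := C \setminus \{v(F_2)\}$, which is legal since $v(F_2) \in A \cap C$. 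In either case $F_3$ has color profile $b$ and so is a facet of $\Sigma^b$; the inclusion $F_3 \subseteq C$ forces $\kappa(F_3) < \kappa(F_2)$; a short computation gives $F_2 \cap F_3 = F_2 \setminus \{v\}$; and $F_1 \cap F_2 \subseteq F_2 \setminus \{v\}$ follows from $F_1 \subseteq B$ combined with $B \cap A \subseteq A \setminus \{v\}$.

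The hard part is the subcase $v \notin V_j$. The more obvious strategy of intersecting a shelling of the $(a_1+\dots+a_m-2)$-skeleton of $\Sigma$ compatible with its shelling (via \Cref{lem:bkm_compatible_shellings_exist}) with the set of facets of color profile $b$ fails here, because the replacement vertex provided by a compatible shelling can land outside $V_j$ and destroy the profile constraint. The remedy implemented above is to trade $v$ for $v(F_2)$ inside $C$: this produces a genuine facet of $\Sigma^b$ with strictly smaller carrier whose intersection with $F_2$ is still the required codimension-one face $F_2 \setminus \{v\}$.
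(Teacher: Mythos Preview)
Your argument is correct and is essentially the paper's proof: both reduce by induction to the single-step case, order the facets of $\Sigma^b$ by their carrier in $\Sigma$, and verify \eqref{eq:bkm_shelling_condition} by lifting to the shelling of $\Sigma$ rather than by invoking a shelling of the skeleton directly.

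Two remarks are worth making. First, the paper in fact follows exactly what you call the ``more obvious strategy'': it restricts a compatible shelling of the $(d-1)$-skeleton (your explicit carrier-then-removed-vertex order is one such compatible shelling) to the facets of profile $b$, and then verifies the shelling condition by passing to the carriers in $\Sigma$---precisely your mechanism. What would fail is trying to use the skeleton shelling's \emph{own} replacement facet directly; neither you nor the paper does that. Second, the paper avoids your case split on whether $v \in V_j$ by observing that since both $A_{\kappa(F_2)}$ and $C$ are balanced of type $a$, the unique vertex $w \in C \setminus A_{\kappa(F_2)}$ lies in the same color class as $v$. Hence $(F_2 \setminus \{v\}) \cup \{w\}$ always has profile $b$; this set equals your $C \setminus \{v(F_2)\}$, so your sub-case $v \notin V_j$ construction already works uniformly and the split is unnecessary.
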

\begin{proof}
By induction and transitivity of compatibility, it suffices to prove the statement for $b_1= a_1-1$ and $b_i = a_i$ for $i=2\dots, m$. Let $d$ denote the dimension of $\Sigma$. Let~``$\ll$'' denote the shelling of~$\Sigma$ and, to simplify notation, let~``$\ll$'' also denote a compatible shelling of the $(d-1)$-skeleton of~$\Sigma$, which exists by \Cref{lem:bkm_compatible_shellings_exist}. We  show that the restriction of this shelling to the subcomplex~$\Sigma^b$ is a shelling.  Let $\overline{A}$ and $\overline{B}$ be two (balanced) facets of~$\Sigma^b$ such that $\overline{A} \ll \overline{B}$. We must show that there exists a $(d-1)$-face $\overline{C}$ of $\Sigma$ and a vertex $v \in \overline{B}$ such that
\begin{equation}\label{eq:bkm_balanced_shelling_cond_c}
\overline{C} \ll \overline{B} \text{ and } \overline{A} \cap \overline{B} \subseteq \overline{C} \cap \overline{B} = \overline{B}\setminus \{v\} \text{ with } |\overline{C} \cap V_i|=|b_i| \text{ for } i =1,\dots, m.
\end{equation}
Let~$A$ and~$B$ denote the smallest (w.r.t.~``$\ll$'') facets of~$\Sigma$ containing~$\overline{A}$ and~$\overline{B}$, respectively. So, either $A =B$ or $A \ll B$. If $A=B$, then we are done, since~$\overline{A}$ and~$\overline{B}$ are codimension-one faces of the same facet, implying that they have all but one vertex in common. In this case $\overline{C}= \overline{A}$ satisfies \Cref{eq:bkm_balanced_shelling_cond_c}. Otherwise, if $A\ll B$, then by shellability of~$\Sigma$ there is a (balanced) facet~$C \ll B$ of~$\Sigma$ that satisfies \Cref{eq:bkm_shelling_condition}. In particular $C$ and~$B$ have all but one vertex in common. Both $C$ and $B$ have type~$a$. This implies that the two vertices in $B\triangle C$ must have the same color. Hence if $\{v\} = B \setminus C$ and $\{w\} = C \setminus B$, then there is an $i_0 \in \{1,\dots,m\}$ such that $v,w \in V_{i_0}$. Assume that~$v$ is not a vertex of~$\overline{B}$. Then $\overline{B} \subset C$, leading to a contradiction to the minimality of~$B$. Hence $v$~is a vertex of~$\overline{B}$. Now define $\overline{C} = B \cap C$. Then $\overline{C}$ is a face of $\Sigma$ and  $\overline{C}=\overline{B} \setminus \{v\} \cup \{w\}$. Since $v$ and $w$ are of the same type,  $|\overline{C} \cap V_i|=|b_i|$ holds for all  $i \in [m]$. The fact that $\overline{C}$ is contained in the facet $C\ll B$ of~$\Sigma$ and $\overline{B}\subset B$ is not contained in~$C$ implies that $\overline{C}\ll \overline{B}$. Hence $\overline{C}$ satisfies~\Cref{eq:bkm_balanced_shelling_cond_c}.
\end{proof}

\subsection{Shellability of the two-fold deleted join $(M_r)^{*2}_\Delta$}
\label{subsec:bkm_shelling_of_M_r_2}

We make the following notational conventions. Let~$k$ and~$r$ be integers with~$k \geq 2$ and $r \geq 2k-1$. We write a face $A$ of~$(M_r)^{*k}_\Delta$ as $A=(A_1, \dots, A_k)$, where~$A_i$ lists the vertices used in the $i$-th row of~$(M_r)^{*k}_\Delta$ for $i =1, \dots, k$. We write $A_i = (\overline{A_i}, A_i^r)$, where $\overline{A_i}$ lists the vertices of~$A_i$ contained in the first $r-1$ blocks of~$(M_r)^{*k}_\Delta$  and~$A_i^r$~lists the vertices of~$A_i$ contained in the $r$-th block of~$(M_r)^{*k}_\Delta$. If we need to clarify that a vertex~$v$ of~$\Sigma$ originates from row~$i$, we write~$(v,i)$. We say that a vertex~$v$ of~$(M_r)^{*k}_\Delta$ is \emph{free for a face}~$A$ if there is no vertex of~$A$ in the column containing~$v$, meaning that $A$ contains neither~$v$ nor a copy of~$v$.

Let~$[r]$ refer to a zero-dimensional complex with~$r$ vertices. Then the deleted join $\Delta_{k,r}:=[r]^{*k}_\Delta$ is a ``chessboard complex'' with with~$k$ rows and~$r$ columns; see~\cite{ziegler_shelling_chessboards1994} for a detailed description. Each of the first $r-1$~blocks of~$(M_r)^{*k}_\Delta$ is isomorphic to~$\Delta_{k,r}$. The $r$-fold join~$\Delta_{k,r}^{*r}$ is a subcomplex of $(M_r)^{*k}_\Delta$. The restriction of $\Delta_{k,r}^{*r}$ to the vertices of the first $r-1$~blocks is isomorphic to the $(r-1)$-fold join~$\Delta_{k,r}^{*(r-1)}$. Denote~$\Delta_{k,r}^{*(r-1)}$ by~$\Sigma_{k,r-1}$. Color the vertices of~$\Sigma_{k,r-1}$ based on the row~$i$ they are in.  Let $a=(a_1, \dots, a_k)$ with $a_i = r-1$ for~$i=1,\dots,k$. Then~$\Sigma_{k,r-1}$ is balanced of type~$a$. For $b=(b_1, \dots, b_k)$ with $0 \le b_i \le a_i$, the balanced $b$-skeleton~$\Sigma_{k,r-1}^b$ of~$\Sigma_{k,r-1}$ is the complex given by faces with at most~$b_i$ vertices in row~$i$; see \Cref{def:bkm_balanced_k_skeleton}.

In the following, let $k \geq 2$ and let $r \geq 2k-1$. Then the complex~$\Delta_{k,r}^{*r}$ is shellable. This follows from the fact that the chessboard complex~$\Delta_{k,r}$ is shellable for $r\geq 2k-1$ \cite[Thm.\,2.3]{ziegler_shelling_chessboards1994} and that joins of shellable complexes are shellable (as one can shell the factors ``lexicographically''  \cite[Prop.\,2.4]{provan_billera_vertex_decom1980}).  Since~$\Sigma_{k,r-1}=\Delta_{k,r}^{*(r-1)}$ is a link of the complex~$\Delta_{k,r}^{*r}$. Hence the shelling of~$\Delta_{k,r}^{*r}$ induces a shelling of~$\Sigma_{k,r-1}$.

\begin{remark}\label{rem:bkm_case_M_r_r=2}
For $r=2$, the complex~$(M_2)^{*2}_\Delta$ is not~$(2r-2)$-connected, since its Euler characteristic is~$2$. Hence~$(M_2)^{*2}_\Delta$ is not shellable. A calculation shows that its fundamental group is trivial. 
\end{remark}

\begin{proposition}[Shellability of $(M_r)^{*2}_\Delta$]\label{prop:bkm_shellability_M_r_2}
Let $r \geq 3$ be an integer. Then the $2$-fold deleted join~$(M_r)^{*2}_\Delta$ of the matroid~$M_r$ is shellable.
\end{proposition}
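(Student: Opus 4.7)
The complex $(M_r)^{*2}_\Delta$ is non-pure with facets of two dimensions: top facets of dimension $2r-1$ have $|A_1|=|A_2|=r$, while two symmetric families of ``extremal'' facets of dimension $2r-2$ consist of pairs in which one row is all of $E_r$ in that row and the other row picks exactly one vertex from each of the first $r-1$ blocks. My plan is to parameterize facets by their content in the $r$-th block, apply \Cref{prop:bkm_balanced_subcomplexes_shellable} to shell each ``piece,'' and concatenate the resulting shellings.

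Write each face as $(\overline{A_1}\cup A_1^r,\overline{A_2}\cup A_2^r)$ with $(\overline{A_1},\overline{A_2})\in\Sigma_{2,r-1}$ and $(A_1^r,A_2^r)\in\mathcal{S}:=(U_{r,r}(E_r))^{*2}_\Delta\cong S^{r-1}$; membership in $(M_r)^{*2}_\Delta$ reduces to the row-size constraints $|\overline{A_i}|+|A_i^r|\le r$. For each face $Q=(Q_1,Q_2)$ of $\mathcal{S}$, set $b(Q):=(\min(r-|Q_1|,r-1),\min(r-|Q_2|,r-1))$ and define the piece $Z_Q:=\Sigma_{2,r-1}^{b(Q)}*Q$. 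Since $\Sigma_{2,r-1}$ is pure shellable and balanced of type $(r-1,r-1)$ with respect to the row coloring, \Cref{prop:bkm_balanced_subcomplexes_shellable} yields a shelling of each $\Sigma_{2,r-1}^{b(Q)}$; joining with a simplex preserves shellability, so every $Z_Q$ is pure shellable. The extremal pieces $Z_{(E_r,\emptyset)}$ and $Z_{(\emptyset,E_r)}$ are exactly the two dimension-$(2r-2)$ families described above. Fixing a shelling of $\mathcal{S}$ from the join structure $(S^0)^{*r}$, a shelling of $\Sigma_{2,r-1}$, and mutually compatible shellings of the relevant $b$-skeleta via \Cref{lem:bkm_compatible_shellings_exist}, I would order the facets of $(M_r)^{*2}_\Delta$ lexicographically: first by $Q=(A_1^r,A_2^r)$ using the chosen linear extension of the shelling of $\mathcal{S}$, and then within each class by the compatible shelling of $Z_Q$.

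The main obstacle will be verifying the non-pure shelling condition~\eqref{eq:bkm_shelling_condition} at the transitions between $Q$-classes. Within a single class the condition follows at once from the pure shellability of $Z_Q$. At a transition one must exhibit, for each newly added facet $B$ of the next class, a prior facet $C$ and a vertex $v\in B$ satisfying~\eqref{eq:bkm_shelling_condition}, despite the intersection of $B$ with all previously added facets absorbing contributions from several earlier classes. The subtle point is the gluing of the two extremal pieces onto the top-dimensional complex: each extremal facet is attached along the join of a sphere in $E_r$ with a simplex in the first blocks, so the order must place these immediately after a compatible top class. The compatibility of shellings provided by \Cref{lem:bkm_compatible_shellings_exist}, combined with the shelling of $\mathcal{S}$, is precisely what aligns the boundaries so that the shelling condition is preserved at each crossover.
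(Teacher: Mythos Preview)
Your parameterization by the block-$r$ content $Q=(A_1^r,A_2^r)$ and your use of \Cref{prop:bkm_balanced_subcomplexes_shellable} to shell each piece $Z_Q$ match the paper's strategy closely. The genuine gap is in the order you put on the $Q$-classes. A shelling of $\mathcal{S}$ is a linear order on the \emph{facets} of $\mathcal{S}$ (pairs with $|Q_1|+|Q_2|=r$), but the $Q$'s that actually index facets of $(M_r)^{*2}_\Delta$ are faces of $\mathcal{S}$ of every dimension from $1$ to $r-1$: a $(2r-1)$-facet has $|Q_i|\ge 1$ and $|Q_1|+|Q_2|$ anywhere in $\{2,\dots,r\}$, while the two extremal $(2r-2)$-families sit at $Q\in\{(E_r,\emptyset),(\emptyset,E_r)\}$. ``A linear extension of the shelling of $\mathcal{S}$'' does not pin down a total order on this heterogeneous collection, and the choice of that order is precisely what governs whether the transition cases go through.

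The paper does \emph{not} order the $Q$'s by any shelling of $\mathcal{S}$. It orders first by the pair of sizes $y=(|Q_1|,|Q_2|)$ under an order $\prec$ (compare the decreasingly sorted tuple $s(y)$ lexicographically, then $y$ itself), and only then by $Q$ lexicographically; the subcomplex $\Delta_{2,r}^{*r}$ (the $Q$'s with $y=(1,1)$) is shelled first, and the two extremal classes come last. This size-first ordering is what makes the main transition case work: whenever a later facet $B$ has some $y_j>1$, row $j$ has an empty block among $1,\dots,r-1$, so a vertex of $B_j^r$ can be swapped into that block to produce a prior facet $C$ meeting condition~\eqref{eq:bkm_shelling_condition}. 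There is also a delicate subcase (equal sizes, $y_2=1$, $A_1^r=B_1^r$) in which one must take $C=(\overline{B},A^r)$; this gives $|B\setminus C|=1$ only because $k=2$, and the paper explicitly flags it as the step that would fail for $k>2$. Your sketch does not reach either of these points, and the compatibility furnished by \Cref{lem:bkm_compatible_shellings_exist} is a statement about behavior within a fixed $b$-skeleton of $\Sigma_{2,r-1}$, not about the interface between different $Q$'s, so it cannot stand in for the missing case analysis.
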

\begin{proof}
For $x=(x_1,x_2)\in \N_{\geq 0}^2$ let $s(x)=(x_{i_1}, x_{i_2})$ be a reordering of the entries of~$x$ by decreasing value such that $x_{i_1} \geq x_{i_2}$. Let $<_l$ be the lexicographic order. Let $\prec$ be the strict order on $\N^2_{\geq 0}$ such that $x \prec y$ if and only if $s(x)<_l s(y)$ or both $s(x)= s(y)$ and $x <_l y$. Let~``$\ll$'' be a shelling of the subcomplex~$\Delta_{2,r}^{*r}$. For facets~$A$ and~$B$~of~$(M_r)^{*2}_\Delta$, let $b,x,y \in \N^2_{\geq 0}$ be defined by
\[
x_i=|A_i^r|, \quad  y_i=|B_i^r|, \quad\text{and} \quad b_i = \min\{r - x_i, r-1\} \quad\text{for }   i=1,2.
\]
If $B \notin\Delta_{2,r}^{*r}$, then let $A \ll B$, if any of the following three cases holds:
\begin{compactenum}[\normalfont (a)]
\item $x \prec y$,
\item $x = y$ and $A^r <_l B^r$,
\item $A^r = B^r$ and $\overline{A} \ll \overline{B}$ for a fixed shelling of the balanced complex~$\Sigma_{2,r-1}^b$.
\end{compactenum}
In the following we show that~``$\ll$'' is indeed a shelling of~$(M_r)^{*2}_\Delta$.Let $A \ll B$ be two facets of~$(M_r)^{*2}_\Delta$. The goal is to find a facet~$C$ that satisfies~\Cref{eq:bkm_shelling_condition}.  We proceed case by case. For clarity, if $A \ll B$ due to~(a), we write $A\ll_{\mathrm{(a)}} B$, likewise for (b) and~(c). 
\\
Case (a): $x \prec y$. Then there is a row~$j \in \{1,2\}$ with $ y_j>1$, implying that $|B_j|>1$. Hence there is a vertex $(b,j) \in B^r \setminus A^r$ and empty block~$t<r$ of~$B$ in row~$j$. We obtain~$C$ by switching~$(b,j)$ with a vertex in the empty block: Let $(c,j)$ be any free vertex for~$B$ in row~$j$ and  block~$t$. Define $C= B \setminus \{(b,j)\} \cup \{(c,j)\}$. Observe that $(|C_1^r|,|C_2^r|) \prec y$. Thus~$C\ll_{\mathrm{(a)}} B$ and~$C$ satisfies~\Cref{eq:bkm_shelling_condition}. 
\\
Case (b): $x=y$ and $A^r <_l B^r$.
% Assume $x\geq y$. The  case $x<y$  follows by a similar argument. 
Assume $x_2=1$. Then~$x_1>1$. Assume $A_1^r = B_1^r$.  Then~$A^r$ and~$B^r$ each have one vertex in row~$2$ and these two vertices are distinct. Define $C=(\overline{B},A^r)$. Since $A^r$ and $B^r$ only have two rows,~$C$ differs from $B$ in only one vertex. (This is the point where this proof would fail if~$k>2$.) Hence $C\ll_{\mathrm{(b)}} B$ and~$C$ satisfies~\ref{eq:bkm_shelling_condition}.   Assume $A_1^r \neq B_1^r$. Since $x_1>1$, there is an empty block~$t<r$ of~$B$ in row~$1$. We switch vertices: Let~$(b,1)$ be any vertex in~$B_1^r \setminus A_1^r$ and let~$(c,1)$ be any free vertex for~$B$ in block~$t$ and row~$1$. Define $C= B \setminus \{(b,1) \} \cup \{(c,1)\}$ and observe that $|C_1^r| = |B_1^r| -1$ and $|C_2^r| = |B_2^r|$. Hence~$C\ll_{\mathrm{(a)}} B$ and~$C$ satisfies~\ref{eq:bkm_shelling_condition}. Assume $x_2>1$, then there is an empty block~$t<r$ of~$B$ in row~$2$. Again we switch vertices: Let~$(b,2)$ be any vertex in~$B_2^r \setminus A_2^r$ and let~$(c,2)$ be any free vertex for~$B$ in block~$t$ and row~$2$. Define $C= B \setminus \{(b,2) \} \cup \{(c,2)\}$ and observe that $|C_2^r| = |B_2^r| -1$ and $|C_1^r| = |B_1^r|$. Hence~$C\ll_{\mathrm{(a)}} B$ and~$C$ satisfies~\Cref{eq:bkm_shelling_condition}. 
\\
Case (c): Since the balanced subcomplex~$\Sigma_{2,r-1}^b$ is shellable by \Cref{prop:bkm_balanced_subcomplexes_shellable}, there exists a facet $\overline{C}$ of~$\Sigma_{2,r-1}^b$ with $\overline{C}\ll \overline{B}$ that satisfies~\Cref{eq:bkm_balanced_shelling_cond_c}. Define $C=(\overline{C},B^r)$. Then~$C\ll_{\mathrm{(c)}} B$ and~$C$ satisfies~\Cref{eq:bkm_shelling_condition}. 
\end{proof}

For $r\geq 2$, define \emph{blocks}~$E_i'$ by
\[
E_i' = \{v_i^1, \dots, v_i^{r},v_i^{r+1}\} \quad \text{for} \quad i =1,\dots, r-1, \quad \text{and} \quad E_r' = \{w_1, \dots, w_r, w_{r+1}\},
\]
for pairwise distinct~$v_i^j$ and~$w_i$.
Define a matroid~$M'$ with ground set~$\bigcup_i E_i$ by
\[
M' = U_{1,r+1} (E_1) \oplus  \dots \oplus U_{1,r+1} (E_{r-1}) \oplus U_{r+1,r+1} (E_r).
\]
Now let~$M'_{r+1}$ be the $(r-1)$-skeleton of~$M'$. Then~$M'_r$ is ``built by the same principle'' as~$M_r$, but has~$r+1$ instead of only~$r$ disjoint bases. Note that~$(M_r')_\Delta^{*2}$ is a pure complex for all~$r \geq 2$ and contains the $r$-fold join~$\Delta_{2,r+1}^{*r}$ as a subcomplex.  From \cite{ziegler_shelling_chessboards1994} we have that~$\Delta_{2,r+1}^{*r}$ is shellable for all~$r\geq 2$. By starting with a shelling of~$\Delta_{2,r+1}^{*r}$ and repeating the proof of \Cref{prop:bkm_shellability_M_r_2} for~$(M_r')_\Delta^{*2}$ instead of~$(M_r)_\Delta^{*2}$ one obtains the following~corollary.

\begin{corollary}\label{cor:bkm_increase_num_bases_M_r}
For any integer~$r\geq 2$, the $2$-fold deleted join~$(M_r')_\Delta^{*2}$ of the matroid~$M_r'$ of rank~$r$ with~$r+1$ disjoint bases is shellable and hence $(2r-2)$-connected.
\end{corollary}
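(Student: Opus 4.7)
The plan is to follow the proof of \Cref{prop:bkm_shellability_M_r_2} almost verbatim, replacing the chessboard block $\Delta_{2,r}$ by $\Delta_{2,r+1}$ throughout, and using the extra column per block to sidestep two features of the original setup: the non-pureness of $(M_r)^{*2}_\Delta$ and the exceptional behaviour at $r = 2$.

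First I would check that $(M_r')^{*2}_\Delta$ is pure of dimension $2r-1$ for every $r \geq 2$. Given a face in which some row $i$ has fewer than $r$ vertices, the goal is to extend row $i$. If row $i$ lacks a vertex in some first block, then that block uses at most two of its $r+1$ columns, leaving at least $r - 1 \geq 1$ free columns in which to insert a new vertex in row $i$. Otherwise row $i$ has $r - 1$ vertices distributed one per first block and $0$ vertices in $E_r'$; since the other row then uses at most $r$ of the $r+1$ columns of $E_r'$, at least one column of $E_r'$ is free for row $i$. In either case row $i$ extends, so every facet has exactly $r$ vertices per row.

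Next I would assemble the shelling. The chessboard complex $\Delta_{2,r+1}$ is shellable by \cite[Thm.\,2.3]{ziegler_shelling_chessboards1994} (since $r + 1 \geq 2 \cdot 2 - 1$ for all $r \geq 2$), so its $r$-fold join $\Delta_{2,r+1}^{*r}$ is shellable by \cite[Prop.\,2.4]{provan_billera_vertex_decom1980}. I would view $\Delta_{2,r+1}^{*r}$ as the subcomplex of $(M_r')^{*2}_\Delta$ of faces with at most one vertex per row per block, fix such a shelling, and then extend to a strict order on all facets of $(M_r')^{*2}_\Delta$ with the same three-layer recipe as in \Cref{prop:bkm_shellability_M_r_2}: first by the decreasing-sorted tuple $x = (|A_1^r|, |A_2^r|)$ under $\prec$, then by lexicographic order on $A^r$, and finally by a compatible shelling of the balanced $b$-skeleton of $\Delta_{2,r+1}^{*(r-1)}$, where $b_i = \min\{r - x_i, r-1\}$.

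The verification of the shelling condition in cases (a), (b), (c) of the proof of \Cref{prop:bkm_shellability_M_r_2} then transfers verbatim: free columns in each block are now counted out of $r+1$ instead of $r$, and the original swap arguments only required the existence of at least one such column. Case (c) in particular still invokes \Cref{prop:bkm_balanced_subcomplexes_shellable} unchanged. The only real obstacle is combinatorial bookkeeping, together with checking that the pathology behind \Cref{rem:bkm_case_M_r_r=2} -- the inability to find a free column in $E_r$ when $r = 2$ -- is eliminated by the purity argument above. Given shellability, $(2r-2)$-connectedness of $(M_r')^{*2}_\Delta$ follows immediately from the classical fact that a pure shellable simplicial complex of dimension $d$ is homotopy equivalent to a wedge of $d$-spheres, hence is $(d-1)$-connected.
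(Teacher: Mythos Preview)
Your proposal is correct and follows essentially the same approach as the paper: the paper's proof consists of the single sentence that one starts with a shelling of $\Delta_{2,r+1}^{*r}$ (shellable for all $r\geq 2$) and repeats the proof of \Cref{prop:bkm_shellability_M_r_2} for $(M_r')_\Delta^{*2}$. Your write-up supplies the details the paper leaves implicit, in particular the purity check and the reason the $r=2$ pathology disappears.
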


\subsection{A covering of~$(M_r)_\Delta^{*2}$}
\label{subsec:bkm_covering_of_M_r_2_by_Sigma}

Next we give a topological description of~$(M_r)_\Delta^{*2}$ via a covering by two subcomplexes. This yields a first proof of \Cref{thm:bkm_main_counterexample}. In addition, the covering will allow us to determine the action of the group~$\Z/2:=\langle t \rangle$ on cohomology needed for the proof of~ \Cref{thm:bkm_radon_for_M_r}. 
Recall that the action of $\Z/2$ on $(M_r)_\Delta^{*2}$ is given by interchanging the factors of the join.

\begin{definition}\label{def:bkm_covering_of_M_r_2}
Let~$r \geq 2$ be an integer. Define~$\Sigma_{2r-1}$ to be the subcomplex of~$(M_r)^{*2}_\Delta$ induced by the facets of dimension~$2r-1$ and their faces, and denote by $\Sigma_{2r-2}$ the subcomplex of~$(M_r)^{*2}_\Delta$ induced by the facets of dimension~$2r-2$ and their faces.
\end{definition}

Since each face of~$(M_r)^{*2}_\Delta$ is contained in a facet of dimension~$2r-1$ or~$2r-2$, the two complexes~$\Sigma_{2r-1}$ and~$\Sigma_{2r-2}$ form a covering. The complex~$\Sigma_{2r-1}$ consists of faces that do not have more than~$r-1$ vertices in either row of the last block. The complex~$\Sigma_{2r-2}$ consists of faces that in one row  use only vertices in the last block and in the other row use no vertices in the last block. Hence~$\Sigma_{2r-2}$ has two connected components that we  refer to as~$\Sigma_{2r-2}^1$ and~$\Sigma_{2r-2}^2$; see \Cref{fig:bkm_sigma_2r-2}.

\begin{figure}[h!]
\begin{center}
\includegraphics[scale=.85]{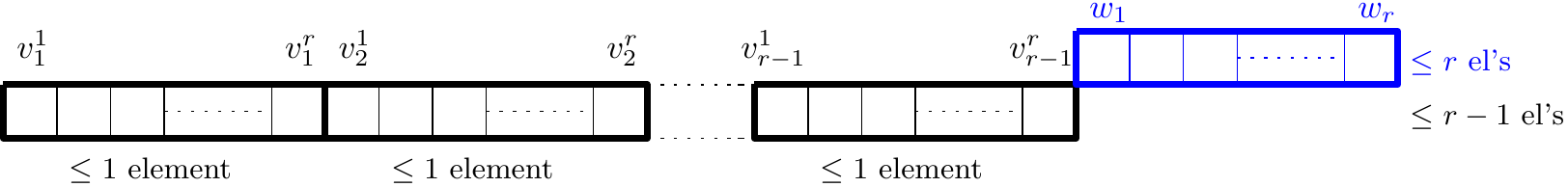}
\caption{One connected component of~$\Sigma_{2r-2}$.}
\label{fig:bkm_sigma_2r-2}
\end{center}
\end{figure}

\begin{proposition} \label{prop:bkm_hpty_type_covering_M_r_2}
Let~$r \geq 3$ be an integer.
\begin{compactenum}[\normalfont (i)]
\item The complex~$\Sigma_{2r-1}$ is a non-trivial  wedge of $(2r-1)$-spheres, and a $\Z/2$-invariant subcomplex. 
\item The complex~$\Sigma_{2r-2}$ is the disjoint union of two contractible spaces $\Sigma_{2r-2}^1$ and $\Sigma_{2r-2}^2$. Moreover, $t\cdot \Sigma_{2r-2}^1=\Sigma_{2r-2}^2$, where $t$~denotes the generator of the group~$\Z/2$.
\item The intersection~$\Sigma_{2r-1} \cap \Sigma_{2r-2}=(\Sigma_{2r-1} \cap\Sigma_{2r-2}^1)\cup (\Sigma_{2r-1} \cap\Sigma_{2r-2}^2)$ is the disjoint union of two non-trivial wedges of $(2r-3)$-spheres, and $t\cdot (\Sigma_{2r-1} \cap\Sigma_{2r-2}^1)=(\Sigma_{2r-1} \cap\Sigma_{2r-2}^2)$.
\end{compactenum}
\end{proposition}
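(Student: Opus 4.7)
I would handle the three parts separately, exploiting the block structure of $(M_r)^{*2}_\Delta$ and the shelling from \Cref{prop:bkm_shellability_M_r_2}.

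\textbf{Part (i).} The $\Z/2$-invariance is immediate: the dimension-$(2r-1)$ facets are exactly those using at most $r-1$ vertices in each row of the last block, a condition symmetric under row swap. In the order $\prec$ from \Cref{prop:bkm_shellability_M_r_2}, every pair $(x_1,x_2)$ with $\max\{x_1,x_2\}\le r-1$ precedes every pair containing $r$, so the $(2r-1)$-dimensional facets form an initial segment of the shelling of $(M_r)^{*2}_\Delta$. Restricting gives a shelling of $\Sigma_{2r-1}$ as a pure $(2r-1)$-complex, so $\Sigma_{2r-1}\simeq\bigvee S^{2r-1}$.

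\textbf{Part (ii).} A dimension-$(2r-2)$ facet must have one row equal to $E_r$ and the other row taking exactly one vertex from each of the first $r-1$ blocks. Otherwise the row occupying the last block leaves a free column there, and the opposite row can be extended to $r$ vertices, producing a $(2r-1)$-facet instead. Splitting by which row contains $E_r$ yields two subcomplexes on disjoint vertex sets, so $\Sigma_{2r-2}=\Sigma_{2r-2}^1\sqcup\Sigma_{2r-2}^2$, with the explicit description
\[
\Sigma_{2r-2}^i \;=\; \Delta(E_r)\,*\,[r]^{*(r-1)},
\]
where $\Delta(E_r)$ is the full $(r-1)$-simplex on $E_r$ placed in row $i$ and the $(r-1)$-fold join records the opposite row's choice of at most one vertex per first block. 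Since the join of a simplex with anything is contractible, each component is contractible; the generator $t$ of $\Z/2$ swaps rows and therefore swaps the two components.

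\textbf{Part (iii).} Intersecting with $\Sigma_{2r-1}$ imposes only the extra constraint that the $E_r$-row has at most $r-1$ vertices, which removes precisely the top face of $\Delta(E_r)$ and leaves its boundary intact. Hence
\[
\Sigma_{2r-1}\cap\Sigma_{2r-2}^i \;=\; \partial\Delta(E_r)\,*\,[r]^{*(r-1)}.
\]
Now $\partial\Delta(E_r)\cong S^{r-2}$, and from $[r]\simeq \bigvee_{r-1}S^0$ together with distributivity of the join over wedges of pointed spaces and $S^a*S^b\simeq S^{a+b+1}$ one obtains $[r]^{*(r-1)}\simeq\bigvee_{(r-1)^{r-1}}S^{r-2}$. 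Joining with $S^{r-2}$ yields a non-trivial wedge of $(2r-3)$-spheres. The row swap exchanges the two intersections, as required.

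\textbf{Main obstacle.} The subtle point is the non-triviality claim in (i): shellability alone guarantees only a (possibly empty) wedge of $(2r-1)$-spheres. I would resolve this by feeding (ii) and (iii) into the Mayer--Vietoris sequence for the cover $\{\Sigma_{2r-1},\Sigma_{2r-2}^1,\Sigma_{2r-2}^2\}$ (whose $Y_1,Y_2$-pieces are contractible and disjoint, so collapsing them shows $(M_r)^{*2}_\Delta\simeq \Sigma_{2r-1}/(A_1\cup A_2)$), together with an $h$-vector count in the shelling of \Cref{prop:bkm_shellability_M_r_2} to pin down the top-dimensional Betti number and thereby the number of wedged spheres.
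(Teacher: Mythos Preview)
Your arguments for (ii), (iii), and the shellability portion of (i) are essentially the paper's: the paper identifies each component of $\Sigma_{2r-2}$ with $(M_r|S)*\Delta_{r-1}$ and each component of the intersection with $(M_r|S)*\Delta_{r-1}^{(r-2)}$, which is exactly your $\Delta(E_r)*[r]^{*(r-1)}$ and $\partial\Delta(E_r)*[r]^{*(r-1)}$ once one notes $M_r|S\cong[r]^{*(r-1)}$. Your observation that the $(2r-1)$-facets already form an initial segment of the shelling of \Cref{prop:bkm_shellability_M_r_2} is correct and slightly sharper than the paper, which instead invokes the general reordering lemma of Bj\"orner--Wachs to sort facets by decreasing dimension.

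Where you diverge is the non-triviality of $\Sigma_{2r-1}$. Your Mayer--Vietoris detour only reduces the question to $H_{2r-1}((M_r)^{*2}_\Delta)\neq 0$, and your ``$h$-vector count'' is left unspecified; a direct computation of $h_{2r}$ requires the full $f$-vector in top dimension, which the paper explicitly calls technical. The paper instead dispatches this in one line: the chessboard join $\Delta_{2,r}^{*r}$ is the very first block of the shelling, it is pure of dimension $2r-1$, and it is not contractible (its reduced Euler characteristic is $-(r^2-3r+1)^r\neq 0$ for $r\ge 3$), so it already contains homology facets; these persist as homology facets of $\Sigma_{2r-1}$, forcing the wedge to be non-trivial. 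This bypasses both Mayer--Vietoris and any global $f$-vector computation, and I would recommend swapping it in for your ``main obstacle'' paragraph.
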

\begin{proof} (i) By~\cite[Lem.\,2.6]{bjorner_wachs_I_nonpure_shellable1996} we can reorder the facets of~$(M_r)^{*2}_\Delta$ by decreasing dimension and obtain a shelling. This is done by first taking the facets of dimension~$2r-1$ in the order given by~``$\ll$,'' then taking the facets of dimension~$2r-2$ again in the order given by~``$\ll$,'' and so forth. This implies that~$\Sigma_{2r-1}$ is shellable. Since the chessboard complex~$\Delta_{2,r}^{*r}$ consists of $(2r-1)$-facets that by definition of~``$\ll$'' are shelled first, homology facets of the chessboard complex are homology facets of~$\Sigma_{2r-1}$. The chessboard complex~$\Delta_{2,r}^{*r}$ is not contractible, and hence~$\Sigma_{2r-1}$ must be a non-trivial wedge of $(2r-1)$-spheres. Since the action of the group~$\Z/2$ is simplicial and therefore preserves the dimension of the simplices we have that $t\cdot \Sigma_{2r-1}=\Sigma_{2r-1}$.
\\
(ii) Each connected component of~$\Sigma_{2r-2}$ is isomorphic to  the join~$(M_r | S)* \Delta_{r-1}$ of the restriction~$(M_r | S)$ and the simplex~$\Delta_{r-1}$, where $S= \bigcup_{i=1}^{r-1} E_i$. Hence each component is contractible. Furthermore, by direct inspection one sees that $t\cdot \Sigma_{2r-2}^1=\Sigma_{2r-2}^2$.
\\
(iii) The intersection~$\Sigma_{2r-1} \cap \Sigma_{2r-2}$ has two connected components. Its faces use in one row only~$r-1$ vertices that are all contained in the last block. In the other row they use no vertices in the last block. Hence both components are isomorphic to the join~$(M_r | S) * \Delta_{r-1}^{(r-2)}$, where~$\Delta_{r-1}^{(r-2)}$ is the $(r-2)$-skeleton of the the simplex. The complex~$(M_r | S)$ is a matroid of rank~$r-1$. It is $(r-2)$-connected and has reduced Euler characteristic~$(r-1)^{r-1}$. Hence each component of~$\Sigma_{2r-1} \cap \Sigma_{2r-2}$ is a non-trivial wedge of $(2r-3)$-spheres.
\end{proof}

\begin{corollary}\label{cor:bkm_htpy_type_M_r_2}
Let $r \geq 3$ be an integer. Then the deleted join~$(M_r)^{*2}_\Delta$ is homotopy equivalent to a non-trivial wedge of spheres of dimensions~$2r-1$ and~$2r-2$.
\end{corollary}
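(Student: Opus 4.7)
The plan is to build $(M_r)^{*2}_\Delta$ from $\Sigma_{2r-1}$ by attaching $\Sigma_{2r-2}$ along the intersection $\Sigma_{2r-1}\cap \Sigma_{2r-2}$, and then to apply standard homotopy-theoretic simplifications using Proposition~\ref{prop:bkm_hpty_type_covering_M_r_2}.

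First I would realise $(M_r)^{*2}_\Delta$ as the homotopy pushout
\[
(M_r)^{*2}_\Delta \;\simeq\; \Sigma_{2r-1}\ \cup_{A_1\sqcup A_2}\ (\Sigma_{2r-2}^1 \sqcup \Sigma_{2r-2}^2),
\]
where $A_i := \Sigma_{2r-1}\cap \Sigma_{2r-2}^i$. Since $\Sigma_{2r-1}\cup \Sigma_{2r-2}$ is a covering of $(M_r)^{*2}_\Delta$ by closed subcomplexes of a CW complex, the pushout is a homotopy pushout. By Proposition~\ref{prop:bkm_hpty_type_covering_M_r_2}(ii) each $\Sigma_{2r-2}^i$ is contractible, and the cofibration $A_i \hookrightarrow \Sigma_{2r-2}^i$ into a contractible space is canonically equivalent (in the undercategory of $A_i$) to the cone inclusion $A_i\hookrightarrow CA_i$. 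Replacing each $\Sigma_{2r-2}^i$ by a cone therefore yields
\[
(M_r)^{*2}_\Delta \;\simeq\; \Sigma_{2r-1}\ \cup_{A_1}\ CA_1\ \cup_{A_2}\ CA_2.
\]

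Next I would argue that each inclusion $A_i\hookrightarrow \Sigma_{2r-1}$ is nullhomotopic. By Proposition~\ref{prop:bkm_hpty_type_covering_M_r_2}(iii), $A_i$ is a wedge of $(2r-3)$-spheres, hence has the homotopy type of a CW complex of dimension $2r-3$. By Proposition~\ref{prop:bkm_hpty_type_covering_M_r_2}(i), $\Sigma_{2r-1}$ is a wedge of $(2r-1)$-spheres and hence $(2r-2)$-connected. Any map from a $(2r-3)$-dimensional CW complex to a $(2r-2)$-connected space is nullhomotopic by cellular approximation, so each $A_i\to \Sigma_{2r-1}$ is nullhomotopic. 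Consequently the first cone attachment gives a homotopy equivalence
\[
\Sigma_{2r-1}\ \cup_{A_1}\ CA_1 \;\simeq\; \Sigma_{2r-1}\ \vee\ \Sigma A_1,
\]
where $\Sigma A_1$ is a wedge of $(2r-2)$-spheres. The space $\Sigma_{2r-1}\vee \Sigma A_1$ is still $(2r-3)$-connected, and $A_2$ still has dimension $2r-3$, so the inclusion $A_2\hookrightarrow \Sigma_{2r-1}\vee \Sigma A_1$ remains nullhomotopic; attaching $CA_2$ therefore yields
\[
(M_r)^{*2}_\Delta \;\simeq\; \Sigma_{2r-1}\ \vee\ \Sigma A_1\ \vee\ \Sigma A_2,
\]
which is a wedge of $(2r-1)$-spheres and $(2r-2)$-spheres. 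Non-triviality follows because $\Sigma_{2r-1}$ is already a non-trivial wedge of $(2r-1)$-spheres by Proposition~\ref{prop:bkm_hpty_type_covering_M_r_2}(i).

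The main obstacle is the bookkeeping behind replacing the pushout by iterated cone attachments and then by a wedge. I would use the standard facts that (a) a homotopy equivalence of one corner of a homotopy pushout induces a homotopy equivalence of pushouts, to pass from $\Sigma_{2r-2}^i$ to $CA_i$, and (b) the homotopy cofibre of a nullhomotopic cofibration $X\hookrightarrow Y$ is homotopy equivalent to $Y\vee \Sigma X$, to unfold each cone attachment into a wedge summand. Both facts are routine but deserve to be cited carefully to ensure the equivalences are compatible.
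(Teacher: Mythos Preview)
Your argument is correct and is precisely the kind of deduction the paper has in mind when it places Corollary~\ref{cor:bkm_htpy_type_M_r_2} immediately after Proposition~\ref{prop:bkm_hpty_type_covering_M_r_2}: the covering $\{\Sigma_{2r-1},\Sigma_{2r-2}\}$ is the whole point, and you unfold the resulting homotopy pushout cleanly. The paper does not spell out a proof, but it has one extra tool on the table that you do not use, namely the shellability of $(M_r)^{*2}_\Delta$ (Proposition~\ref{prop:bkm_shellability_M_r_2}). With shellability one already knows the complex is a wedge of spheres whose dimensions are among the facet dimensions $2r-1$ and $2r-2$; then only non-triviality in each dimension remains, and that is read off from the Mayer--Vietoris sequence of the covering (this is exactly the sequence written out later in the proof of Theorem~\ref{thm:bkm_radon_for_M_r}). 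Your route is more self-contained in that it never invokes shellability; the paper's route avoids the pushout bookkeeping by outsourcing ``wedge of spheres'' to Bj\"orner--Wachs.

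One small omission: you argue non-triviality only for the $(2r-1)$-spheres, via Proposition~\ref{prop:bkm_hpty_type_covering_M_r_2}(i). For the corollary to serve as a proof of Theorem~\ref{thm:bkm_main_counterexample} you also need $(2r-2)$-spheres to occur, i.e.\ that $\Sigma A_i$ is a \emph{non-trivial} wedge of $(2r-2)$-spheres. This follows immediately from Proposition~\ref{prop:bkm_hpty_type_covering_M_r_2}(iii), which says each $A_i$ is a non-trivial wedge of $(2r-3)$-spheres; you should state this explicitly.
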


\subsection{Proof of \Cref{thm:bkm_main_counterexample}}
\label{subsec:bkm_proof_main_thm_1}

The homotopy type of a shellable complex can be computed as follows. Let $\Sigma$ be a shellable complex of dimension~$d$. Define the \emph{degree} of a face~$A$ of $\Sigma$ by $\delta(A) = \max\{ |F| \colon  F \in \Sigma, A \subseteq F\}$. Thus $\delta(A)-1$ is the dimension of a largest facet containing~$A$.  Define the \emph{$f$-triangle} $(f_{i,j}(\Sigma))_{0 \le i \le j \le d+1}$ of~$\Sigma$ by  $f_{i,j}(\Sigma) =  |\{ A \in \Sigma \colon |A| = i, \delta(A) =j \}|$. Thus $f_{i,j}(\Sigma)$ is equal to the number of faces~$A$ of~$\Sigma$ of dimension~$i-1$ that are contained in a largest facet of dimension~$j-1$. For $j =0,1, \dots, d+1$ set
\[
h_j(\Sigma) = (-1)^j \cdot \sum_{i=0}^j (-1)^i f_{i,j}(\Sigma) \;.
\]
The vector $h(\Sigma)=(h_0(\Sigma), \dots, h_{d+1}(\Sigma))$ is the diagonal of the ``$h$-triangle'' of~$\Sigma$ \cite[Def.\,3.1]{bjorner_wachs_I_nonpure_shellable1996}. By \cite[Thm.\,4.1]{bjorner_wachs_I_nonpure_shellable1996}, the homotopy type of~$\Sigma$ is a wedge of spheres, consisting of $h_j(\Sigma)$ copies of the $(j-1)$-sphere for $j=1, \dots, d+1$.

\begin{proof}[Proof of \Cref{thm:bkm_main_counterexample}]
The matroid~$M_r$ is of rank~$r$ and has~$r$ disjoint bases. For~$r=2$ the complex~$(M_r)_\Delta^{*2}$ is simply connected, but not $2$-connected; see \Cref{rem:bkm_case_M_r_r=2}. Let~$r\geq 3$ in the following.  Then the complex~$(M_r)_\Delta^{*2}$ is shellable by \Cref{prop:bkm_shellability_M_r_2}. Hence by \cite[Thm.\,4.1]{bjorner_wachs_I_nonpure_shellable1996} the homotopy type of~$(M_r)_\Delta^{*2}$ is a wedge of spheres, consisting of~$h_j$ spheres of dimension $j-1$ for $j=1, \dots, 2r$, where $h((M_r)_\Delta^{*2})=(h_0, \dots, h_{2r})$ is the diagonal of the $h$-triangle of~$(M_r)_\Delta^{*2}$.

For~$j=0, \dots, 2r-2$, the entries~$h_j$ are zero, since~$(M_r)_\Delta^{*2}$ has no facets of dimension~$j$. Hence $(M_r)_\Delta^{*2}$ is $(2r-3)$-connected.

Let $j=2r-1$.  We will show that $h_{2r-1} \neq 0$ and thus that $(M_r)_\Delta^{*2}$ is not $(2r-2)$-connected. The number $h_{2r-1}$ is equal to the alternating sum $-f_{0,2r-1} + f_{1,2r-1} - \dots + f_{2r-1,2r-1}$ of the entries of the row $f_{2r-1}((M_r)_\Delta^{*2}))$ of the $f$-triangle of~$(M_r)_\Delta^{*2}$. Here, $f_{i, 2r-1}$ denotes the cardinality of the set
\[
\mathcal{F}_{i-1}:=\{ A \in (M_r)_\Delta^{*2} \colon |A| = i, \; \delta(A) =2r-1\}
\] 
of faces of dimension $i-1$ of~$(M_r)_\Delta^{*2}$ that are contained in a largest facet of dimension~$2r-2$. Facets~$A$ of dimension~$2r-2$ are given by choosing all vertices $\{w_1, \dots, w_r\}$ in one row of~$(M_r)_\Delta^{*2}$ and one vertex from each of the sets $\{v_i^1, \dots, v_i^r \}$ for $i=1, \dots, r-1$ in the other row. This implies that $f_{i, 2r-1} =0$ for $i=0, \dots, r-1$ and that any face in $\mathcal{F}_{i-1}$ for $i \geq r$ must use all vertices $\{w_1, \dots, w_r\}$. See \Cref{fig:bkm_matroid_deljoin_m5_8_facet} for an example of a facet of dimension $2r-2=8$ for~$r=5$.  

Let $S=\{v_i^1, \dots, v_i^r : 1 \le i \le r-1 \}$ and let $M_r | S$ be the restriction of~$M_r$ to the vertex set~$S$. Then for $i=0, \dots,r-1$ there is a $2$-to-$1$ surjection between the faces in~$\mathcal{F}_{r+i-1}$ and the set of $(i-1)$-dimensional faces of~$M_r | S$. This implies that 
\[
f_{r+i,2r-1}((M_r)_\Delta^{*2})= 2\, f_{i}(M_r|S)\quad  \text{for}\quad  i=0, \dots,r-1,
\]
where~$f_{i}(M_r|S)$ is the number of $(i-1)$-faces of~$M_r|S$. Hence
\[
h_{2r-1} = (-1)^{r-1}\,2\,\big(\chi(M_r|S)-1\big),
\]
where $\chi(M_r|S)$ denotes the Euler characteristic of~$M_r|S$.

The complex~$M_r|S$ is isomorphic to the $(r-1)$-fold join of the restriction $M_r|\{v_1^1, \dots, v_1^r \}$, which in turn is isomorphic to a $0$-dimensional complex with~$r$ vertices. Hence $M_r|S$ has Euler characteristic equal to $1 + (-1)^{r-1} (r-1)^{r-1}$. Thus $h_{2r-1} =2\,(r-1)^{r-1}$, which is non-zero since~$r\geq 2$.
\end{proof}

The missing value~$h_{2r}$ of the diagonal of the $h$-triangle can be calculated, similarly to the above, by calculating the complete $f$-vector of~$(M_r)_\Delta^{*2}$. Both calculations are technical. Instead we give lower bounds. 

\begin{corollary}\label{cor:bkm_betti_nums_M_r_2}
Let $r\geq 3$ be an integer and let~$\beta_i$ denote the $i$-th reduced Betti number of~$(M_r)_\Delta^{*2}$ for $i=0,\dots, 2r-1$. Then
\[
\beta_i = \begin{cases}
2\,(r-1)^{r-1} &\text{ if } i = 2r-2\\
0 &\text{ if } i \le 2r-3
\end{cases} \qquad \text{and} \qquad  
\beta_{2r-1} \geq (r^2 -3 r +1)^r\,.
\]
\end{corollary}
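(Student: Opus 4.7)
The plan is to read off the vanishing of $\beta_i$ for $i \le 2r-3$ and the value of $\beta_{2r-2}$ directly from the proof of \Cref{thm:bkm_main_counterexample}, and then to lower-bound $\beta_{2r-1}$ by comparing $\Sigma_{2r-1}$ with a pure $(2r-1)$-dimensional subcomplex on which a Künneth calculation is immediate.

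For the first two values, the shellability of $(M_r)_\Delta^{*2}$ from \Cref{prop:bkm_shellability_M_r_2} together with \cite[Thm.\,4.1]{bjorner_wachs_I_nonpure_shellable1996} yields a wedge decomposition with $h_j$ copies of $S^{j-1}$ for $j=1,\dots,2r$, so $\beta_{j-1} = h_j$. The proof of \Cref{thm:bkm_main_counterexample} already establishes $h_j = 0$ for $j \le 2r-2$ and $h_{2r-1} = 2(r-1)^{r-1}$, giving the stated values $\beta_i = 0$ for $i \le 2r-3$ and $\beta_{2r-2} = 2(r-1)^{r-1}$.

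For the lower bound on $\beta_{2r-1}$, I first reduce to $\Sigma_{2r-1}$: every $(2r-1)$-face of $(M_r)_\Delta^{*2}$ lies in a $(2r-1)$-facet and neither complex has any $2r$-cells, so the chain-complex description gives $H_{2r-1}((M_r)_\Delta^{*2}) \cong H_{2r-1}(\Sigma_{2r-1})$. More generally, for any pure $(2r-1)$-dimensional subcomplex $\Sigma \subseteq \Sigma_{2r-1}$, the top homology is just $\ker \partial_{2r-1}$ on both sides, and the inclusion of chain groups restricts to an inclusion of these kernels, giving $\beta_{2r-1}(\Sigma) \le \beta_{2r-1}(\Sigma_{2r-1})$.

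The natural candidate for $\Sigma$ is the $r$-fold join $\Delta_{2,r}^{*r}$ of the chessboard complex $\Delta_{2,r} = [r]^{*2}_\Delta$: its facets are precisely the $(2r-1)$-facets of $(M_r)_\Delta^{*2}$ using exactly one vertex per row in each of the $r$ blocks, namely the disjoint pairs of bases of $M_r$, so $\Delta_{2,r}^{*r} \subseteq \Sigma_{2r-1}$. For $r \ge 3$ the graph $\Delta_{2,r}$ is connected with $2r$ vertices and $r(r-1)$ edges, whence $\beta_1(\Delta_{2,r}) = r^2-3r+1$ by the Euler characteristic. Iterating the (torsion-free) join Künneth formula $\tilde H_n(X*Y) \cong \bigoplus_{i+j=n-1} \tilde H_i(X) \otimes \tilde H_j(Y)$ then yields $\beta_{2r-1}(\Delta_{2,r}^{*r}) = (r^2-3r+1)^r$, which combined with the previous paragraph proves the lower bound. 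The only step that is not a routine calculation is the identification of $\Delta_{2,r}^{*r}$ as a subcomplex of $\Sigma_{2r-1}$, but this is immediate from the block--row description of faces in \Cref{fig:bkm_matroid_deljoin_mr}.
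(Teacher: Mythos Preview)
Your proof is correct. The values of $\beta_i$ for $i\le 2r-2$ are obtained exactly as in the paper, by quoting the computation of the $h$-triangle diagonal in the proof of \Cref{thm:bkm_main_counterexample}.

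For the lower bound on $\beta_{2r-1}$ you and the paper both compare with the subcomplex $\Delta_{2,r}^{*r}$ and compute its top Betti number as $(r^2-3r+1)^r$; the difference lies only in how the inequality $\beta_{2r-1}\big((M_r)_\Delta^{*2}\big)\ge \beta_{2r-1}\big(\Delta_{2,r}^{*r}\big)$ is justified. The paper uses the specific shelling order from \Cref{prop:bkm_shellability_M_r_2}, in which $\Delta_{2,r}^{*r}$ is shelled first, so that its homology facets persist as homology facets of the whole complex. Your argument is more elementary: since both complexes have dimension $2r-1$, the top homology is just the kernel of the top boundary map, and the inclusion of chain groups restricts to an injection on these kernels. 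Your route avoids any reliance on the particular shelling order (indeed it would work even without knowing shellability), at the cost of passing through the K\"unneth formula for joins rather than a direct Euler-characteristic count; the paper's route ties the bound more tightly to the combinatorics already set up. The detour through $\Sigma_{2r-1}$ is harmless but unnecessary: the same kernel-inclusion argument applies directly to $\Delta_{2,r}^{*r}\subseteq (M_r)_\Delta^{*2}$.
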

\begin{proof}
Note that $\beta_i = h_{i+1}$ and that $h_{2r}$ is equal to the number of $(2r-1)$-dimensional homology facets of~$(M_r)_\Delta^2$.
In the shelling of~$(M_r)_\Delta^2$ the $r$-fold join~$\Delta_{2,r}^{*r}$ of the chessboard complex is shelled first, implying that its homology facets are also homology facets of~$(M_r)_\Delta^2$. The chessboard complex is pure and has Euler characteristic~$1 - (r^2-3r+1)^r$. Therefore $h_{2r} \geq (r^2 -3 r +1)^r$.
\end{proof}

\section{Further results}
\label{sec:bkm_further_results}

\subsection{Bounds for the topological Tverberg number of matroids}
\label{subsec:bkm_bounds_top_tverbeg_num}

Recall that the \emph{topological Tverberg number $\TT(M,d)$} of~$M$ is the largest integer~$k\geq1$ such that for every continuous map $f \colon M \rightarrow \R^d$, there is a collection $\{\sigma_1, \dots, \sigma_k\}$ of~$k$ pairwise disjoint faces, called a Tverberg $k$-partition, such that $\bigcap_{i=1}^k f(\sigma_i) \neq \emptyset$.

\begin{corollary}[Lower bounds for the topological Tverberg number]\label{cor:lower_bounds_TT_num}
Let~$b,d,r \geq 1$ be integers and let~$M$ be a matroid of rank~$r$ with~$b$ disjoint bases. Let~$x=d+1$ for ease of notation.
Let
\[
\ell(b,r,x) =\frac{ 2x + (r-x)b+ \sqrt{(2x + b(r-x))^2 + 8bx^2} }{8x}.
\]
If $p$ is a prime power with 
\[
p \le 2\ell(b,r,x),
\] 
then $\TT(M,d)\geq p$.
\end{corollary}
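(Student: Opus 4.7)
The plan is to combine \cite[Cor.\,3]{barany_kalai_meshulam2016}, which provides an explicit lower bound on the connectivity of the deleted join $M^{*p}_\Delta$ in terms of $b,r,p$, with the standard join-scheme configuration space/test map argument. First I would set up the test map: a continuous map $f\colon M\to \R^d$ \emph{without} a Tverberg $p$-partition induces, for $p=q^k$ the given prime power, a group-equivariant map $F\colon M^{*p}_\Delta \to S^{(p-1)(d+1)-1}$, where $(\Z/q)^k$ acts freely on the test sphere via a reduced regular representation. That $p$ is a prime power, rather than an arbitrary integer, enters precisely here, through \cite{ozaydin_1987}, which supplies the required free action of a finite group on the target sphere. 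It therefore suffices to rule out such an equivariant map.

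Next I would invoke Dold's theorem \cite{dold_simple_proofs_borsuk1983} (or its generalization by Volovikov \cite{volovikov_on_a_top_general1996}): the non-existence of an equivariant map $M^{*p}_\Delta\to S^{(p-1)(d+1)-1}$ follows as soon as the connectivity of the configuration space reaches the dimension $(p-1)(d+1)-1=(p-1)x-1$ of the target. Substituting the connectivity lower bound from \cite[Cor.\,3]{barany_kalai_meshulam2016} into the inequality
\[
\operatorname{conn}(M^{*p}_\Delta)\;\geq\;(p-1)x-1
\]
then produces an explicit inequality in the parameters $b$, $r$, $x$, $p$.

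The remaining task is purely algebraic: after clearing denominators, this inequality rearranges into the quadratic form
\[
q(p)\;:=\;2xp^{2}\;-\;\bigl(2x+b(r-x)\bigr)\,p\;-\;bx\;\leq\;0.
\]
Since the leading coefficient $2x$ is positive and $q(0)=-bx<0$, the quadratic $q$ is non-positive precisely on the interval between its two real roots. The quadratic formula identifies the larger root as
\[
\frac{2x+b(r-x)+\sqrt{\bigl(2x+b(r-x)\bigr)^{2}+8bx^{2}}}{4x}\;=\;2\ell(b,r,x),
\]
so $q(p)\leq 0$ is equivalent to $p\leq 2\ell(b,r,x)$, which is the desired condition. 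The main, though minor, obstacle is the bookkeeping of floor functions appearing in the connectivity bound of \cite[Cor.\,3]{barany_kalai_meshulam2016}: they must be handled carefully so that the rounded-down connectivity still meets the threshold $(p-1)x-1$. This cost, however, is absorbed cleanly once $p$ is restricted to integers, and the statement $p\leq 2\ell(b,r,x)$ is what remains.
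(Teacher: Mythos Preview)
Your proposal is correct and follows essentially the same route as the paper: apply \cite[Cor.\,3]{barany_kalai_meshulam2016} for the connectivity of $M^{*p}_\Delta$, compare it to the dimension $(p-1)x-1$ of the test sphere via Volovikov's lemma, and reduce to the quadratic inequality $2xp^{2}-(2x+b(r-x))p-bx\le 0$, whose larger root is $2\ell(b,r,x)$ and whose smaller root is negative (since $q(0)=-bx<0$). One small remark: the prime-power hypothesis enters through the applicability of Volovikov's lemma for the elementary abelian group $(\Z/q)^k$ acting (fixed-point freely) on the test sphere, not through \cite{ozaydin_1987}; otherwise your outline matches the paper's argument, and your handling of the final inequality is in fact slightly cleaner than the paper's.
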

\begin{proof}
We use the join scheme and take a connectivity-based approach based on the lemma in~\cite{volovikov_on_a_top_general1996} due to Volovikov, which can be seen as a generalization of Dold's theorem. If we show that the connectivity of the configuration space~$M^{*p}_\Delta$ is at least as high as the dimension of the test space~$S^{(p-1)(d+1)-1}$, then the result follows.

By \cite[Cor.\,3]{barany_kalai_meshulam2016} the deleted join~$M^{*p}_\Delta$ has connectivity at least $br/(\lceil b/p\rceil +1) -2$, implying that its connectivity is at least $\lceil br/(\lceil b/p\rceil +1)\rceil -2$. Hence it suffices to show that 
\[
\frac{br}{b/p +2} - (p-1)(d+1) \geq 0. 
\]
This is equivalent to 
\begin{equation}\label{eq:bkm_prime_p_lower_bound_tverb_num}
-2xp^2 + (2x-xb +br)p + xb \geq 0,
\end{equation}
which defines a negatively curved parabola in~$p$ with zeros
\[
\frac{2x + (r-x)b + br \pm \sqrt{(2x + (r-x)b)^2 + 8bx^2}}{4x}\;.
\]
Finally, we observe that 
\[
\frac{2x + (r-x)b - \sqrt{(2x + (r-x)b)^2 + 8bx^2}}{4x} \; \le \;  \ell(b,r,x) \;  \le \;  p \;\le 2\ell(b,r,x),
\]
and hence~$p$ satisfies \Cref{eq:bkm_prime_p_lower_bound_tverb_num}.
\end{proof}

Upper bounds for the topological Tverberg number for matroids with codimension at least~$3$, meaning $r-1 \le d-3$, can be obtained using the new sufficiency criterion \cite[Thm.\,7]{mabillard_wagner_elim_I_2015} due to Mabillard and Wagner  for the non-existence of Tverberg $k$-partitions for simplicial complexes with codimension~$3$. For a real number $x \geq 0$, we let~$\lceil x\rceil_\mathrm{npp}$ denote the smallest integer~$k\geq x$ that is not a prime-power.

\begin{proposition}[Upper bounds for the topological Tverberg number] \label{prop:bkm_upper_bounds_TT_num}
Let $d\geq 3$ and $r\geq 1$ be integers and let $r \le d-2$. If~$M$ is a matroid of rank~$r$, then
\[
\left\lceil \tfrac{d}{d-r+1} \right \rceil_\mathrm{npp} > \mathrm{TT}(M,d).
\]
\end{proposition}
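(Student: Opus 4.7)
The plan is to combine the Mabillard--Wagner sufficiency criterion for the elimination of $k$-fold intersections of pairwise disjoint faces in codimension at least three with {\"O}zaydin's theorem that the equivariant Tverberg obstruction vanishes when $k$ is not a prime power.

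First I would set $k := \lceil d/(d-r+1)\rceil_\mathrm{npp}$ and rearrange the defining inequality $k(d-r+1) \geq d$ as $(k-1)d \geq k(r-1) = k\dim M$. This is the canonical dimension count expressing that the expected dimension of the $k$-fold intersection locus of a generic map $M \to \R^d$ is nonpositive. The second hypothesis $r \leq d-2$ rewrites as $\dim M \leq d-3$, placing the pair $(M,d)$ in the codimension-three metastable range to which the sufficiency criterion \cite[Thm.\,7]{mabillard_wagner_elim_I_2015} applies. The goal is then to exhibit a continuous map $f\colon M \to \R^d$ without any Tverberg $k$-partition, which is exactly the statement $\TT(M,d)<k$.

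Next, I would apply that criterion in the following form: to produce such an $f$ (indeed a PL one) it suffices to exhibit an $\S_k$-equivariant map from the $k$-fold deleted join $M^{*k}_\Delta$ to the standard free $\S_k$-sphere $S^{(k-1)(d+1)-1}$. Since $k$ is not a prime power by construction, {\"O}zaydin's theorem \cite{ozaydin_1987} furnishes such an equivariant map: the primary and higher equivariant obstructions vanish for non--prime-power $k$. Pulling back through the Mabillard--Wagner criterion then yields the desired $f$, giving $\TT(M,d) < k$ and hence the claimed bound $\lceil d/(d-r+1)\rceil_\mathrm{npp} > \TT(M,d)$.

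The step I expect to require the most care is checking the \emph{exact} form of the metastable inequality in \cite[Thm.\,7]{mabillard_wagner_elim_I_2015} in the boundary case where $d/(d-r+1)$ happens to be a positive integer that is also a prime power: there one has to verify that the bump from $\lceil d/(d-r+1)\rceil$ up to the next non--prime-power via $\lceil\cdot\rceil_\mathrm{npp}$ provides enough slack to turn the marginal dimension estimate $(k-1)d \geq k\dim M$ into a usable strict inequality. The codimension bound $d-r+1 \geq 3$ coming from $r \leq d-2$ is what supplies the room for the Whitney-type elimination at the technical heart of the Mabillard--Wagner argument; otherwise the plan is essentially formal.
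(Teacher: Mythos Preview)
Your overall strategy---dimension count, \"Ozaydin for the equivariant map, Mabillard--Wagner to convert this into an $f$ without a Tverberg $k$-partition---is exactly what the paper does. The gap is that you run everything through the join scheme, whereas both cited ingredients are formulated for the \emph{product} scheme, and the difference matters here. Mabillard--Wagner's Theorem~7 takes as input an $\S_k$-equivariant map $M^{\times k}_\Delta \to S^{(k-1)d-1}$, and the version of \"Ozaydin's result that applies to an \emph{arbitrary} complex of bounded dimension (as opposed to the deleted join of the specific simplex $\Delta_{(k-1)(d+1)}$) is the statement that every \emph{free} $\S_k$-CW-complex of dimension at most $(k-1)d$ maps equivariantly to $S^{(k-1)d-1}$. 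The deleted product $M^{\times k}_\Delta$ is a free $\S_k$-complex, so both results apply directly; that is the paper's argument. The deleted join $M^{*k}_\Delta$, by contrast, is \emph{not} a free $\S_k$-space once $k\geq 3$ (and here $k\geq 6$, the smallest non--prime-power): any face of the form $(\sigma,\emptyset,\dots,\emptyset)$ is fixed by all permutations of the last $k-1$ slots. So your direct appeal to \"Ozaydin for $M^{*k}_\Delta$ is not justified as stated, and incidentally $S^{(k-1)(d+1)-1}=S(W_k^{\oplus(d+1)})$ is not a free $\S_k$-sphere either. Switching to the product scheme closes both issues at once and recovers the paper's proof verbatim.

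Your closing worry about needing a strict inequality in a boundary case is unfounded: the non-strict bound $\dim M^{\times k}_\Delta \le k(r-1)\le (k-1)d$ is exactly the hypothesis of the \"Ozaydin-type result, with no extra slack required.
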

\begin{proof}
Let $k= \left\lceil \tfrac{d}{d-r+1} \right \rceil_\mathrm{npp}$. The dimension of the deleted product $M^{\times k}_\Delta$ is at most~$k(r-1)$, which by choice of~$k$ is at most~$(k-1)d$. Thus by \cite[Cor.\,5.2]{blagojevic_ziegler_beyond_borsuk_ulam2017}, which is a simple consequence of \cite[Lem.\,4.2]{ozaydin_1987}, there exists an $\mathfrak{S}_k$-equivariant map from  $M^{\times k}_\Delta$  to the sphere~$S^{(k-1)d-1}$. (Here~$\mathfrak{S}_k$ denotes the symmetric group on~$k$ letters.) Since~$M$ has dimension at most~$d-3$, we can apply \cite[Thm.\,7]{mabillard_wagner_elim_I_2015} and get the existence of a continuous map $f\colon M \rightarrow \R^d$ that does not have a Tverberg $k$-partition. This implies that~$k > TT(M,d)$.
\end{proof}

\begin{remark}\label{rem:bkm_patak_tverberg_type}
Recently Paták \cite{patak_tverberg_matroids2017} building on \cite{goaoc_mabillard_patak_wagner_etc_heawood_ineq2016} proved several Tverberg-type results for matroids that are not directly related to \cite[Thm.\,1]{barany_kalai_meshulam2016}, including colored versions. We point out~\cite[Lem.\,2]
{patak_tverberg_matroids2017}: Let~$M$ be a matroid of rank~$r \geq 1$ with closure operator $\mathrm{cl}$ and let $S$ be a subset of the ground set of~$M$ of cardinality at least $r(k-1)+1$. Then there exist pairwise disjoint subsets $S_1, \dots, S_k$ of $S$ such that $\mathrm{cl} \,\emptyset \subsetneq \mathrm{cl} S_1 \subseteq  \dots \subseteq \mathrm{cl} S_k$. 
\end{remark}

\subsection{Connectivity of the deleted product of a matroid}
\label{subsec:bkm_conn_del_prod}

In \Cref{thm:bkm_conn_del_prod} we assume that the rank~$r$ of~$M$ is at least~$k$, otherwise~$M^{\times k}_\Delta$ can be empty. To simplify the statement of the theorem we assume that $k \le b$, since then the dimension of~$M^{\times k}_\Delta$ is equal to~$(r-1)k$ and, in particular, is independent of the number of disjoint independent sets of lower cardinality. We point out, however, that the proof of \Cref{thm:bkm_conn_del_prod} can  be applied to the setting where $k<b$; see for example~\Cref{cor:bkm_conn_simplex}. 

\begin{theorem}[Connectivity bounds for the deleted product]\label{thm:bkm_conn_del_prod}
Let $b,k,r \geq 2$ be integers with $r \geq k$ and~$b\geq k$. Let~$M$ be a matroid of rank~$r$ with~$b$ disjoint bases and let~$M^{\times k}_\Delta$ be the $k$-fold deleted product of~$M$.
\begin{compactenum}[\normalfont (i)]
\item Then the connectivity of~$M^{\times k}_\Delta$ is at least 
\[
 r - 2 - \left\lfloor \frac{r(k-1)}{b} \right \rfloor \,.
\]
\item If $b\geq r(k-1) +1$, then~$M^{\times k}_\Delta$ is $(r-2)$-connected, but not $(r-1)$-connected.
\end{compactenum}
\end{theorem}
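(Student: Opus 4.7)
The plan is to leverage the $b$ pairwise disjoint bases $B_1, \ldots, B_b$ of $M$ as the principal combinatorial data. For part (i), I would pursue a nerve-lemma argument: for each injection $\alpha \colon [k] \to [b]$, the product $U_\alpha = B_{\alpha(1)} \times \cdots \times B_{\alpha(k)}$ is a contractible subcomplex of $M^{\times k}_\Delta$, being a product of pairwise-disjoint $(r-1)$-simplices. A generic face of $M^{\times k}_\Delta$ need not be contained in any single $U_\alpha$ (a typical independent set of $M$ does not sit inside one of the $B_i$), so the naive collection $\{U_\alpha\}$ must first be enlarged. The combinatorial heart of the argument is then a matroid-union / Edmonds--Rado style lemma that controls how any $k$-tuple of pairwise disjoint independent sets can be extended to $k$ disjoint bases chosen from $\{B_i\}$, with the deficiency $\lfloor r(k-1)/b \rfloor$ appearing as the obstruction when $b$ is close to $k$. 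Once the cover is verified to be good, the Nerve Lemma reduces the connectivity question to a chessboard-like combinatorial estimate on the nerve, which I expect to yield exactly the claimed bound. As a back-up approach I would consider induction on $k$ via the forget-last-coordinate map $M^{\times k}_\Delta \to M^{\times(k-1)}_\Delta$, whose fibers are the restrictions $M|(E \setminus (F_1 \cup \cdots \cup F_{k-1}))$; a Quillen-style fiber lemma combined with the shellability (hence $(r-2)$-connectivity) of these restrictions should recover the same bound.

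For part (ii), the lower bound $(r-2)$-connectivity is immediate from (i), since $b \geq r(k-1)+1$ forces $\lfloor r(k-1)/b \rfloor = 0$. For the upper bound (failure of $(r-1)$-connectivity), I would exhibit a nontrivial class in $\widetilde{H}_{r-1}(M^{\times k}_\Delta)$ by the following construction. Fix any $k-1$ of the disjoint bases, say $B_2, \ldots, B_k$, and set $M' = M|(E \setminus (B_2 \cup \cdots \cup B_k))$. Since $b - (k-1) \geq 2$ of the original bases still live inside $M'$, the restriction is a shellable matroid of rank $r$ with at least two facets, hence homotopy equivalent to a nontrivial wedge of $(r-1)$-spheres. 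The subcomplex $M' \times B_2 \times \cdots \times B_k$ of $M^{\times k}_\Delta$ is homotopy equivalent to $M'$, and a generator of $\widetilde{H}_{r-1}(M')$ injects into $\widetilde{H}_{r-1}(M^{\times k}_\Delta)$ because the projection onto the first factor of $M^{\times k}_\Delta$ sends this subcomplex to $M' \hookrightarrow M$, and this inclusion is injective on $\widetilde{H}_{r-1}$ (both $M'$ and $M$ are $(r-1)$-dimensional, so any $(r-1)$-cycle of $M'$ is a non-boundary in $M$).

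The main obstacle is the nerve step in part (i). Both enlarging the basis cover into a good cover of $M^{\times k}_\Delta$ and then reading off the precise loss $\lfloor r(k-1)/b \rfloor$ from the combinatorics of the resulting nerve require a delicate matroid-theoretic extension argument, and the regime $b \approx k$ is especially touchy, since there even small matroid obstructions translate into large connectivity losses. By contrast, the upper bound in part (ii) is essentially forced once the subcomplex $M' \times B_2 \times \cdots \times B_k$ is identified and the projection argument is in place.
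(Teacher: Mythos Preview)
Your back-up approach for (i) is in fact the paper's proof: the paper uses the projection $p_{k-1}\colon M^{\times k}_\Delta\to M^{\times(k-1)}_\Delta$ onto the first $k-1$ factors and applies Smale's Vietoris mapping theorem for homotopy, which plays exactly the role of your ``Quillen-style fiber lemma''. However, you have the key fiber computation wrong. The fibers $M|\bigl(E\setminus(F_1\cup\cdots\cup F_{k-1})\bigr)$ are \emph{not} $(r-2)$-connected in general, because deleting up to $r(k-1)$ elements from $M$ can lower the rank. The crucial observation---and this is precisely where the bound in (i) comes from---is that, by pigeonhole over the $b$ disjoint bases, at least one $B_i$ loses at most $\lfloor r(k-1)/b\rfloor$ of its elements, so every such deletion still has rank at least $r-\lfloor r(k-1)/b\rfloor$ and is therefore $\bigl(r-2-\lfloor r(k-1)/b\rfloor\bigr)$-connected. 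With this correction the induction closes immediately.

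Your primary nerve-lemma approach, by contrast, is hard to rescue. Since the $B_i$ are pairwise disjoint, any two distinct cells $U_\alpha$ and $U_\beta$ already intersect trivially, so the nerve of $\{U_\alpha\}$ is a discrete set---and, as you note, the collection does not even cover $M^{\times k}_\Delta$. Enlarging the cover enough to repair both defects while keeping the intersections contractible and the nerve tractable would, as far as I can see, require work comparable to the fiber argument you already sketched.

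For part (ii), your construction of an explicit class in $\widetilde H_{r-1}$ via the subcomplex $M'\times B_2\times\cdots\times B_k$ together with the first-coordinate projection is correct and is not the paper's argument. The paper instead observes that when $b\geq r(k-1)+1$ every fiber of $p_{k-1}$ still has rank $r$ (and no coloops), so Smale's theorem makes $\pi_{r-1}(M^{\times k}_\Delta)\to\pi_{r-1}(M^{\times(k-1)}_\Delta)$ surjective, and then inducts down to $M$. Your projection argument is more direct and avoids re-invoking Smale.
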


The \emph{ordered configuration space $\mathrm{Conf}(X,n)$} of~$n$ particles in a topological space~$X$ is defined as the space $ \{(x_1, \dots, x_n) \in X^n \colon x_i \neq x_j \text{ for } i \neq j \}$. As Smale \cite[Lem.\,2.1]{shapiro_obstructions_I_1957} observed, in the case where $n=2$ and $X=\Sigma$ is a finite simplicial complex, the $2$-fold deleted product $\Sigma^{\times 2}_\Delta$ is a deformation retract of~$\mathrm{Conf}(M,2)$. This leads to the following corollary of \Cref{thm:bkm_conn_del_prod}.

\begin{corollary}\label{cor:bkm_conn_config_space}
Let $b,r\geq2$ be integers and let~$M$ be a matroid of rank~$r$ with~$b$ disjoint bases. Then the  configuration space $\mathrm{Conf}(M,2) = \{(x,y) \in M^2 \colon x \neq y \}$ of two ordered particles in~$M$ is at least
\[
\Big(r -2 - \left\lfloor \frac{r}{b} \right \rfloor \Big)\text{-}
\]
connected and not $(r-1)$-connected, when $b\geq r+1$.
\end{corollary}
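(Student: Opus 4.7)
The plan is to derive the corollary directly from \Cref{thm:bkm_conn_del_prod} combined with the deformation retraction noted immediately before the statement. First, I would verify that \Cref{thm:bkm_conn_del_prod} applies with $k=2$: the hypotheses $r \geq k$ and $b \geq k$ reduce to $r,b \geq 2$, which is part of the assumption. Substituting $k=2$ into part~(i) yields that $M^{\times 2}_\Delta$ has connectivity at least
\[
r - 2 - \left\lfloor \frac{r(k-1)}{b} \right\rfloor \;=\; r - 2 - \left\lfloor \frac{r}{b} \right\rfloor.
\]
If moreover $b \geq r+1 = r(k-1)+1$, then part~(ii) of the same theorem asserts that $M^{\times 2}_\Delta$ is $(r-2)$-connected but not $(r-1)$-connected.

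Next, I would invoke Smale's observation (attributed in the discussion preceding the corollary to \cite[Lem.\,2.1]{shapiro_obstructions_I_1957}): for any finite simplicial complex~$\Sigma$, the $2$-fold deleted product $\Sigma^{\times 2}_\Delta$ is a deformation retract of $\mathrm{Conf}(\Sigma, 2)$. Applying this with $\Sigma = M$ gives that $M^{\times 2}_\Delta$ and $\mathrm{Conf}(M,2)$ are homotopy equivalent, hence have the same connectivity. Substituting the bounds from the previous step then yields both assertions of the corollary.

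There is essentially no obstacle: the corollary is a formal consequence of \Cref{thm:bkm_conn_del_prod} and the cited deformation retraction. The only thing demanding care is bookkeeping --- checking that with $k=2$ the general bound $\lfloor r(k-1)/b \rfloor$ collapses to $\lfloor r/b \rfloor$ and that the sharp threshold $b \geq r(k-1)+1$ reads as the stated $b \geq r+1$. All substantive work is already contained in the proof of \Cref{thm:bkm_conn_del_prod}.
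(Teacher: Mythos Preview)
Your proposal is correct and is exactly the argument the paper has in mind: the corollary is stated immediately after \Cref{thm:bkm_conn_del_prod} as a direct consequence, with the deformation retraction $\mathrm{Conf}(M,2) \simeq M^{\times 2}_\Delta$ from \cite[Lem.\,2.1]{shapiro_obstructions_I_1957} noted just before, and no further proof is given. Your bookkeeping check that $k=2$ collapses the bound to $\lfloor r/b\rfloor$ and the threshold to $b\geq r+1$ is precisely what is needed.
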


By \cite[Thm.\,3.2.1]{provan_billera_vertex_decom1980} any matroid~$M$ of rank~$r$ is pure shellable, implying that~$M$  is contractible or homotopy equivalent to a wedge of $(r-1)$-spheres. The reduced Euler characteristic~$\widetilde{\chi}(M)$ of~$M$ can be computed using the M\"obius function~$\mu$ of the lattice of flats~$L$ of the dual matroid of~$M$. By \cite[Prop.\,7.4.7]{bjorner_matroid_applications_1992} $\widetilde{\chi}(M)$ is zero if $M$ has coloops (elements contained in every basis), and is otherwise equal to $(-1)^{r-1}|\mu_L(\hat{0},\hat{1})|$, which is non-zero. In fact $|\mu_L(\hat{0},\hat{1})|$ is non-zero for any geometric lattice \cite[Thm.\,4]{rota_foundations_comb_theory_1_1964}. 

\begin{proof}[Proof of \Cref{thm:bkm_conn_del_prod}]
(i) A cell of~$M^{\times k}_\Delta$ is of the form
\[
\mathrm{relint}(\sigma_1)\times \dots \times \mathrm{relint}(\sigma_k),
\]
where $\sigma_i \cap \sigma_j = \emptyset$ for all $i, j$ with $1\le i<j \le k$. Its dimension is given by the sum of the dimensions of the~$\sigma_i$. Since the~$\sigma_i$ are vertex-disjoint and by assumption~$k \le b$, a product cell of maximal dimension uses~$rk$ vertices and has dimension~$(r-1) k$. 

We fix~$r$ and establish the connectivity of~$M^{\times k}_\Delta$ by induction on~$k$. Assume $k=1$. If~$M$ has no coloops, it is homotopy equivalent to a wedge of $(r-1)$-spheres, else it is contractible.

Assume the statements of the theorem are true for $k-1$ for a fixed $k\geq 2$. Consider the projection~$p_{k-1}$ of the $k$-fold product~$M^k$ to the first $k-1$~coordinates. The map $p_{k-1}$ restricts to a surjective continuous proper map 
\[
p_{k-1} \colon M^{\times k}_\Delta \longrightarrow M^{\times k-1}_\Delta.
\]
Since~$r\geq k$, both the domain and codomain of~$p_k$ are connected by induction.  They are also locally compact, locally contractible separable metric spaces.

Let $x \in M^{\times k-1}_\Delta$ be a point and let faces $\sigma_1, \dots, \sigma_{k-1} \in M$ be minimal under inclusion such that~$x$ is contained in the product $ \mathrm{relint}(\sigma_1)\times \dots \times \mathrm{relint}(\sigma_{k-1})$ of the relative interiors of the~$\sigma_i$. Let $ V_x= \mathrm{vert}(\sigma_1) \cup  \dots \cup \mathrm{vert}(\sigma_{k-1})$ be the union of the vertex sets of the~$\sigma_i$.  Assume $V_x =\{v_1, \dots, v_n\}$. Then the preimage
\begin{align*}
p_{k-1}^{-1} (\{x\})&\cong \{y \in M \colon \exists\, \sigma \in M  \text{ s.t. } y \in \sigma \text{ and } \sigma\cap \sigma_i = \emptyset \text{ for } i=1,\dots,k-1 \}\\
&=\{ \sigma \in M \colon \{v_i \}  \not\subseteq \mathrm{vert}(\sigma) \text{ for } i =1,\dots, n\}.
\end{align*}
Hence~$p_{k-1}^{-1} (\{x\})$ is homeomorphic to (any geometric realization of) the successive deletion $M_x :=M\setminus v_1 \setminus  \dots \setminus v_n$ of the vertices~$V_x$ from~$M$. Any deletion of a matroid is again a matroid (see \cite{oxley_matroid_theory_2011}), implying by induction that~$M_x$ is a matroid. Let~$r_x$ denote its rank. The total number~$n$ of vertices deleted is at most~$r(k-1)$. Let
\[
r_k =r - \left\lfloor \frac{r(k-1)}{b} \right\rfloor.
\] 
If~$V_x$ contains $\lfloor r(k-1)/b \rfloor$ vertices from $b$~disjoint bases of~$M$, then~$r_x$ can be equal to~$r_k$, otherwise~$r_x$ is larger. Equality is given, if $M$ has no other disjoint independent sets of cardinality greater than~$r_k$. Thus $M_x$ is a matroid of  rank~$r_x \geq r_k$. Hence~$p_{k-1}^{-1} (\{x\})$ is locally contractible and either contractible (if~$M_x$ has coloops) or homotopy equivalent to a wedge of $(r_x-1)$-spheres, which is at least $(r_k-2)$-connected. By induction hypothesis $M^{\times k}_\Delta$ is $(r_{k-1} -2)$-connected. Since $r_k \le r_{k-1}$, the deleted product~$M^{\times k}_\Delta$ is~$(r_k-2)$-connected by Smale's theorem, which is stated below.
\begin{quote}
{\small
\textbf{Smale's Theorem \cite{smale_vietoris_mapping1957}}. \emph{Let $X$ and $Y$ be connected, locally compact, separable metric spaces, and in addition let $X$ be locally contractible.
	Let $f:X\longrightarrow Y$ be a surjective continuous proper map.
	If for every $y\in Y$ the preimage $f^{-1}(\{y\})$ is locally contractible and $n$-connected, then the induced homomorphism
	\[
	f_{\#} :  \pi_i(X)\rightarrow \pi_i(Y)
	\]
	is an isomorphism for all $0\leq i\leq n$, and is an epimorphism for $i=n+1$.}
}
\end{quote}

\noindent
(ii) Let $b \geq (r-1)(k-1) +1$ for fixed~$r$ and fixed~$k$. Then $r_k=r$. Hence~$p_{\ell-1}^{-1} (\{x\})$ is a matroid of rank~$r$ for all $x \in  M^{\times \ell-1}_\Delta$ and all $\ell$ with~$2\le \ell \le k$.  For $\ell=2$, the deleted product~$M^{\times \ell-1}$ is equal to the matroid~$M$, which is $(r-2)$-connected and not $(r-1)$-connected, since~$M$ has no coloops. By induction on~$\ell$ the skeleton $M^{\times \ell-1}_\Delta$ is $(r-2)$-connected but not $(r-1)$-connected. Hence by Smale's theorem~$M^{\times \ell}_\Delta$ is $(r-2)$-connected, but not $(r-1)$-connected. 
\end{proof}

As a corollary we obtain a proof of the following result due to B{\'{a}}r{\'{a}}ny, Shlosman, and Sz{\H{u}}cs.

\begin{corollary}[{\cite[Lem.\,1]{barany_shlosman_topol_tverberg1981} }]\label{cor:bkm_conn_simplex}
Let $r$ and~$k$ be integers with $r \geq k \geq 1$. Then the deleted product~$(\Delta_{r-1})^{\times k}_\Delta$ of the simplex~$\Delta_{r-1}$ of dimension~$r-1$ is $(r-k-1)$-connected and not $(r-k)$-connected.
\end{corollary}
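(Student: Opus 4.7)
The plan is to adapt the inductive projection argument from the proof of \Cref{thm:bkm_conn_del_prod} to the simplex $M = \Delta_{r-1}$, viewed as the uniform matroid $U_{r,r}$ of rank $r$ with a unique basis (so $b=1$). Because $b = 1 < k$ for $k \geq 2$, the hypotheses of \Cref{thm:bkm_conn_del_prod} fail, and we must handle a failure of surjectivity of the projection map that does not arise in the matroid setting of the theorem.

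I will induct on $k$ for fixed $r$. The base case $k=1$ is trivial since $(\Delta_{r-1})^{\times 1}_\Delta = \Delta_{r-1}$ is contractible. For the inductive step, consider $p_{k-1}\colon (\Delta_{r-1})^{\times k}_\Delta \to (\Delta_{r-1})^{\times k-1}_\Delta$. Over $x = (\sigma_1,\dots,\sigma_{k-1})$, the fiber is the matroid deletion $\Delta_{r-1}\setminus V_x$, a simplex on $r - |V_x|$ vertices, which is contractible when $|V_x| < r$ and empty when $|V_x| = r$. The image $Y$ of $p_{k-1}$ is thus the subcomplex of cells with $|V_x| < r$; since $\dim(\sigma_1\times\dots\times\sigma_{k-1}) = |V_x| - (k-1)$, this is precisely the $(r-k)$-skeleton $X^{(r-k)}$ of $X := (\Delta_{r-1})^{\times k-1}_\Delta$. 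Smale's theorem, applied to the proper surjection $p_{k-1}\colon (\Delta_{r-1})^{\times k}_\Delta \to X^{(r-k)}$ with contractible fibers, then yields a weak homotopy equivalence.

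It remains to analyze the connectivity of $X^{(r-k)}$. By the inductive hypothesis, $X$ is $(r-k)$-connected and has dimension $r-k+1$. Cellular approximation gives $\pi_i(X^{(r-k)}) \cong \pi_i(X) = 0$ for $i \le r-k-1$, so $X^{(r-k)}$ is $(r-k-1)$-connected. For the ``not $(r-k)$-connected'' claim, the long exact sequence of the pair $(X, X^{(r-k)})$ together with $H_{r-k}(X) = 0$ identifies
\[
H_{r-k}(X^{(r-k)}) \;\cong\; \im\bigl(\partial_{r-k+1}^{X} \colon C_{r-k+1}(X) \to C_{r-k}(X)\bigr).
\]
This image is non-zero because any top-dimensional product cell $\sigma_1 \times \dots \times \sigma_{k-1}$ with $\bigsqcup_i \sigma_i = [r]$ has at least one factor of positive dimension (since $\dim = r-k+1 \geq 1$) and hence non-trivial simplicial boundary. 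Hurewicz then forces $\pi_{r-k}(X^{(r-k)}) \neq 0$.

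The main subtlety is the very first observation: identifying the image of $p_{k-1}$ as the $(r-k)$-skeleton of $(\Delta_{r-1})^{\times k-1}_\Delta$. This turns the failure of surjectivity (which prevents a direct appeal to \Cref{thm:bkm_conn_del_prod}) into exactly the cellular object whose connectivity matches the desired bound. After this identification, the rest of the argument is a routine application of Smale's theorem together with standard cellular techniques.
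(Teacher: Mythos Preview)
Your proof is correct and follows essentially the same approach as the paper's: both induct on~$k$ via the projection $p_{k-1}$ and Smale's theorem, identifying the image of $p_{k-1}$ with the $(r-k)$-skeleton of $(\Delta_{r-1})^{\times(k-1)}_\Delta$ (the paper writes $\mathrm{sk}_{r-k}\big((\Delta_{r-1})^{\times k}_\Delta\big)$, which appears to be a typo for $k-1$). You supply the details the paper omits---cellular approximation for the lower bound on connectivity and the long exact sequence of the pair together with Hurewicz for the upper bound---so your argument is in fact more complete than the paper's terse version.
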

\begin{proof}
The simplex $\Delta_{r-1}$ is a uniform matroid~$U_{r,r}$ of rank~$r$. It has one basis. Its dimension~$d_k$ is equal to $r - k$ for all~$k\geq 1$. The fibers $p_{k-1}^{-1} (\{x\})$ are all contractible, since vertex-deletions of the simplex are contractible. Hence for all~$k\geq2$, Smale's theorem  together with the Whitehead theorem implies that $(\Delta_{r-1})^{\times k}_\Delta$ and  $\mathrm{sk}_{r-k}\big((\Delta_{r-1})^{\times k}_\Delta\big)$ are homotopy equivalent. In particular~$(\Delta_{r-1})^{\times k}_\Delta$ is $(r-k-1)$-connected and not $(r-k)$-connected.
\end{proof}

\subsection{A topological Radon-type theorem for $M_r$}
\label{subsec:bkm_topological_radon_thm}

The following topological Radon-type theorem for the family of matroids $M_r$ ($r\geq 3$) follows from \Cref{thm:bkm_main_counterexample} by using the join scheme and taking the connectivity-based approach.

\begin{corollary}\label{cor:bkm_radon_for_M_r_via_connectivity}
Let $d\geq 1$ and $r\geq 3$ be integers such that $2r -3\geq  d$. Then $\TT(M_r,d) \geq 2$.
\end{corollary}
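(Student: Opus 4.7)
The plan is to apply the standard join--scheme configuration space/test map argument for $k=2$ and invoke Dold's theorem, using \Cref{thm:bkm_main_counterexample} as the key connectivity input.

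First, I would set up the configuration space/test map. Suppose, for the sake of contradiction, that there exists a continuous map $f\colon M_r \rightarrow \R^d$ with no Tverberg $2$-partition, i.e., for any two disjoint faces $\sigma_1, \sigma_2 \in M_r$ we have $f(\sigma_1)\cap f(\sigma_2)=\emptyset$. Following the join scheme, the map $f$ extends affinely on each join factor to give a continuous map $F\colon (M_r)^{*2}_\Delta \rightarrow \R^d * \R^d$, and the assumption that $f$ has no Tverberg $2$-partition means the image of $F$ avoids the thin diagonal $\{(\tfrac12 x,\tfrac12 x): x\in \R^d\}$. Composing with the standard $\Z/2$-equivariant deformation retraction of $(\R^d*\R^d)\setminus\mathrm{diag}$ onto the unit sphere $S^d$ in the orthogonal complement of the diagonal yields a $\Z/2$-equivariant map
\[
\Phi\colon (M_r)^{*2}_\Delta \longrightarrow S^d,
\]
where $\Z/2$ acts on $(M_r)^{*2}_\Delta$ by swapping the two join factors and on $S^d$ antipodally. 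The action on the deleted join is free by construction, and the antipodal action on $S^d$ is free.

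Next, I would invoke the connectivity estimate. By \Cref{thm:bkm_main_counterexample}, the complex $(M_r)^{*2}_\Delta$ is $(2r-3)$-connected. The hypothesis $2r-3 \geq d$ therefore guarantees that $(M_r)^{*2}_\Delta$ is at least $d$-connected, while the target $S^d$ is a free $\Z/2$-CW complex of dimension exactly $d$. Dold's theorem \cite{dold_simple_proofs_borsuk1983} then asserts that no $\Z/2$-equivariant map from $(M_r)^{*2}_\Delta$ to $S^d$ can exist, contradicting the existence of $\Phi$. Hence every continuous map $f\colon M_r\to \R^d$ admits a Tverberg $2$-partition, i.e., $\TT(M_r,d)\geq 2$.

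There is really no hard step here once \Cref{thm:bkm_main_counterexample} is in hand; the argument is simply the standard configuration space/test map scheme of Bajmóczy--Bárány, combined with the connectivity bound. The only item that deserves explicit mention is that the $\Z/2$-action on $(M_r)^{*2}_\Delta$ is free (which is immediate from the disjointness condition defining the deleted join) so that Dold's theorem applies. For the case $r=3$ (where $d\leq 3$) one can check the hypotheses directly against \Cref{rem:bkm_case_M_r_r=2} to confirm that the connectivity bound is used in the range where it is valid, namely $r\geq 3$.
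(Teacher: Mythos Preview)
Your proof is correct and follows essentially the same approach as the paper: invoke the $(2r-3)$-connectivity of $(M_r)^{*2}_\Delta$ from \Cref{thm:bkm_main_counterexample} and apply Dold's theorem in the join scheme to rule out a $\Z/2$-equivariant map to $S^d$. The paper's proof is a one-line reference to this standard connectivity-based argument, whereas you have spelled out the test-map construction explicitly; your closing remark about \Cref{rem:bkm_case_M_r_r=2} is unnecessary (that remark concerns $r=2$, which is excluded by hypothesis), but this does not affect the validity of the argument.
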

\begin{proof}
By \Cref{thm:bkm_main_counterexample} the connectivity of the configuration space~$(M_r)^{*2}_\Delta$ is~$2r-3$, which is at least as high as the dimension of the test space~$S^d$. 
\end{proof}

We obtain a sharper result by  using the join scheme and applying \cite[Thm.\,1]{blagojevic_blagojevic_mcclary_equi_tian_jordan_curve}, which is obtained by a Fadell--Husseini index calculation. We point out the following typo in \cite[Thm.\,1]{blagojevic_blagojevic_mcclary_equi_tian_jordan_curve}: In the notation of the theorem, the roles of~$X$ and~$Y$ should be interchanged in the last sentence of the statement. See \cite[Thm.\,4.2]{blagojevic_lueck_ziegler_equiv_top_config_spaces2015} for a more general version that implies \cite[Thm.\,1]{blagojevic_blagojevic_mcclary_equi_tian_jordan_curve}.

\begin{proof}[Proof of \Cref{thm:bkm_radon_for_M_r}]
Without loss of generality let $d=2r-2$. In order to prove the theorem using the join scheme, we need to show that there is no $\Z/2$-equivariant map $(M_r)^{*2}_\Delta \to S^{d}$, where the sphere is equipped with the antipodal action. 

From \Cref{cor:bkm_htpy_type_M_r_2} we have that $H^{d}((M_r)^{*2}_\Delta;\F_2)\neq 0$, and $H^{i}((M_r)^{*2}_\Delta;\F_2)= 0$ for all~$i$ with $1\leq i\leq d-1$. Hence $(M_r)^{*2}_\Delta$ is not $d$-connected. Consequently the classical Dold theorem~\cite{dold_simple_proofs_borsuk1983} cannot be applied. To prove non-existence of a $\Z/2$-equivariant map we use~\cite[Thm.\,1]{blagojevic_blagojevic_mcclary_equi_tian_jordan_curve} For this it suffices to prove that the cohomology $H^{d}((M_r)^{*2}_\Delta;\F_2)$ is a free $\F_2[\Z/2]$-module where the action is induced by the $\Z/2$-action on~$(M_r)^{*2}_\Delta$.

Indeed, consider the covering $\{\Sigma_{d+1},\Sigma_{d}\}=\{\Sigma_{d+1},\Sigma_{d}^1\cup\Sigma_{d}^2\}$ of the complex $(M_r)^{*2}_\Delta$; see \Cref{subsec:bkm_covering_of_M_r_2_by_Sigma}. The relevant part of the induced Mayer--Vietoris sequence in cohomology with $\F_2$-coefficients has the form:
\[
%{\small
\xymatrix@C=1pc @R=.5pc{
H^{d-1}(\Sigma_{d+1})\oplus H^{d-1}(\Sigma_{d}^1)\oplus H^{d-1}(\Sigma_{d}^2)\ar[r]&
H^{d-1}(\Sigma_{d+1}\cap \Sigma_{d}^1)\oplus H^{d-1}(\Sigma_{d+1}\cap \Sigma_{d}^2)\ar[r]&  \\
H^{d}((M_r)^{*2}_\Delta)\ar[r]&
H^{d}(\Sigma_{d+1})\oplus H^{d}(\Sigma_{d}^1)\oplus H^{d}(\Sigma_{d}^2).
}
%}
\]
From Proposition \ref{prop:bkm_hpty_type_covering_M_r_2} we have that the subcomplexes $\Sigma_{d}^1$ and $\Sigma_{d}^2$ are contractible, and that $\Sigma_{d+1}$ is $d$-connected.
Thus the sequence simplifies to:
\[
%{\small
\xymatrix@C=1pc @R=.5pc{
0\ar[r]&
H^{d-1}(\Sigma_{d+1}\cap \Sigma_{d}^1)\oplus H^{d-1}(\Sigma_{d+1}\cap \Sigma_{d}^2)\ar[r]&  
H^{d}((M_r)^{*2}_\Delta)\ar[r]&
0.
}
%}
\]
Since, again by Proposition \ref{prop:bkm_hpty_type_covering_M_r_2}, the $\Z/2$-action interchanges the subcomplexes $\Sigma_{d+1}\cap \Sigma_{d}^1$ and~$\Sigma_{d+1}\cap \Sigma_{d}^2$, we conclude that $H^{d}((M_r)^{*2}_\Delta;\F_2)$ is a free $\F_2[\Z/2]$-module, as claimed.
\end{proof}

\subsection{Failure of shellability and vertex-decomposability for general~$k$}
\label{subsec:bkm_general_shellability_vertex_decomp}

For a definition of vertex-decomposability for possibly non-pure complexes see \cite[Def.\,11.1]{bjorner_wachs_II_nonpure_shellable1997}.

\begin{proposition}\label{prop:bkm_M_r_k_not_shellable_not_VD}\mbox{} Let~$k$ and~$r$ be integers.
\begin{compactenum}[\normalfont (i)]
\item For $k\geq 3$ and $r\geq 2k-1$ the complex $(M_r)^{*k}_\Delta$ is not shellable.\label{prop:bkm_M_r_k_not_shellable_not_VD_i}
\item For~$k\geq 2$ and $r\geq 2k-1$ the complex $(M_r)^{*k}_\Delta$ is not vertex-decomposable.\label{prop:bkm_M_r_k_not_shellable_not_VD_ii}
\end{compactenum}
\end{proposition}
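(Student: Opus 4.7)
The plan is to reduce part (ii) for $k\ge 3$ to part (i), since vertex-decomposability implies shellability \cite{bjorner_wachs_II_nonpure_shellable1997}; only part (i) and the case $k=2$ of (ii) then require direct arguments.

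For part (i), I would argue that $(M_r)_\Delta^{*k}$ has a link that is not shellable, and invoke the fact that links in shellable complexes are shellable. A natural candidate is the link of a face obtained by fixing disjoint bases of $M_r$ in $k-1$ of the rows, so that in the remaining coordinates the link reduces to a smaller deleted-join-of-matroid complex which mirrors $(M_r)_\Delta^{*k}$ but with one fewer row. One would then verify that this residual link carries non-trivial homology in a degree strictly below its maximum facet dimension---contradicting the wedge-of-spheres description of shellable complexes from \cite[Thm.\,4.1]{bjorner_wachs_I_nonpure_shellable1996}---or, alternatively, compute $h$-triangle entries directly following the $f$-triangle computation in the proof of \Cref{thm:bkm_main_counterexample} and exhibit a negative value, which directly obstructs shelling by non-negativity of the $h$-triangle.

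For part (ii) in the remaining case $k=2$, the plan is to show that no vertex of $(M_r)_\Delta^{*2}$ serves as a shedding vertex in the sense of \cite[Def.\,11.1]{bjorner_wachs_II_nonpure_shellable1997}. Modulo the $\Z/2$-action on rows together with column permutations within each block, vertices split into two orbits: the last-block vertices $(w_j,\ell)$ and the first-block vertices $(v_i^j,\ell)$. For a last-block vertex $v=(w_j,1)$ I would exhibit a facet $F=(A_1,A_2)$ of $\Sigma\setminus v$ that extends by $v$ to a $(2r-1)$-dimensional facet of $\Sigma$, namely
\[
A_1=\{v_1^{a_1},\dots,v_{r-1}^{a_{r-1}}\},\qquad A_2=\{w_\ell:\ell\ne j\}\cup\{v_1^b\}\text{ with }b\ne a_1,
\]
so that every non-$v$ addition to row 1 is blocked by $A_2$'s last-block coverage and every addition to row 2 is blocked by the row-rank bound; this directly violates the shedding condition. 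For a first-block vertex $v=(v_i^j,1)$, a counting argument shows that blocking all non-$v$ extensions of row 1 would force the other row to occupy at least $2(r-1)$ columns (from the block $E_i$ and from $E_r$), contradicting $|A_2|\le r$ once $r\ge 3$; hence first-block vertices do satisfy the shedding condition, but the link $\Sigma/v$ inherits the same matroid-deleted-join structure (with a single block replaced by a truncated analogue), so an inductive application of the same analysis rules out vertex-decomposability of the link.

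The main obstacle is part (i): finding a link whose non-shellability is manifest uniformly across all $k\ge 3$ is delicate, since the most natural candidates---such as the link of a full basis in one row---remain shellable by a straightforward variant of the argument in \Cref{prop:bkm_shellability_M_r_2}. The cleanest route is likely to identify a face whose link is forced by the column-disjointness and rank-$r$ constraints into a subcomplex realising middle-dimensional homology forbidden by Bj\"orner--Wachs non-negativity. A secondary obstacle is closing the inductive step in (ii), which requires tracking a strictly decreasing parameter when passing from $\Sigma$ to $\Sigma/v$.
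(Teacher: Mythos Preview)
Your strategy for (i)---exhibit a non-shellable link and invoke \cite[Prop.\,10.14]{bjorner_wachs_II_nonpure_shellable1997}---is exactly the paper's, but the proof lives in the choice of face, which you have not found. Your candidate (fixing $k-1$ disjoint bases in $k-1$ rows) yields a link that is a restriction of $M_r$, hence a matroid, hence shellable; and the $h$-triangle alternative is not worked out. The paper's face is, for $r=2k-1$,
\[
A=\{(v_1^i,i),\dots,(v_{r-1}^i,i):i=1,\dots,k-1\}\cup\{(v_1^r,k),\dots,(v_{k-1}^r,k),(w_k,k),\dots,(w_r,k)\}.
\]
Rows $1,\dots,k-1$ each have $r-1$ vertices and can accept exactly one more, necessarily from block~$r$; row~$k$ is full; and in block~$r$ only columns $1,\dots,k-1$ are free. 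Thus the link of $A$ is the square chessboard complex $\Delta_{k-1,k-1}$, which is not shellable by \cite{friedman_hanlon_betti_num_chessboards1998}. This is the missing idea.

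For (ii) with $k=2$, your facet construction for a last-block vertex $v=(w_j,\ell)$ is correct and is essentially the paper's computation. The gap is your treatment of first-block vertices: you want to pass to the \emph{link} $\Sigma/v$ and induct, but you yourself note there is no evident decreasing parameter, and the link (one row shortened, one block collapsed in that row) does not obviously inherit the needed structure. The paper avoids this induction by following the \emph{deletion} branch instead. Assume a shedding sequence exists; among the successive deletions $\Sigma\setminus v_1\setminus v_2\setminus\cdots$, let $s_0$ be the first vertex deleted from block~$r$, and let $M'$ be the complex just before that step. Every prior deletion removed a first-block vertex, so block~$r$ is intact in~$M'$. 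Now run your own argument inside $M'$: a facet of $M'/s_0$ that uses $(w_i,1)$ for $i\ne j$ in one row has no extension in $M'\setminus s_0$, violating the shedding condition for $s_0$. The point is that deletions of first-block vertices preserve precisely the block-$r$ structure your facet argument needs, so no induction on links is required.
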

\newpage
\begin{proof}
(\ref{prop:bkm_M_r_k_not_shellable_not_VD_i}) Let $r=2k-1$. Consider the face
\[
A= \{(v_1^i,i),\dots, (v_{r-1}^i,i) \; \colon \; i=1, \dots,k-1 \} \cup \{(v_1^r,k),\dots, (v_{k-1}^r,k), (w_{k},k), \dots, (w_r,k) \}.
\]
Hence~$A$ has in rows~$1$ to $k-1$ one vertex in each of the first $r-1$~blocks and  in the $k$-th row $k-1$~vertices in the first~$r-1$ blocks and~$r-k+1$ vertices in the last block. The link~$(M_r)^{*k}_\Delta / A$ of~$A$ is isomorphic to the square chessboard complex~$\Delta_{k-1,k-1}$, which is not (pure) shellable by \cite[Thm.\,2]{friedman_hanlon_betti_num_chessboards1998}. However, links of  shellable complexes must be shellable \cite[Prop.\,10.14]{bjorner_wachs_II_nonpure_shellable1997}.
\\
(\ref{prop:bkm_M_r_k_not_shellable_not_VD_ii}) To see that~$(M_r)^{*k}_\Delta$  is not vertex-decomposable, we argue by contradiction. First we point out that it suffices to show this statement for $k=2$, since by \cite[Thm.\,11.3]{bjorner_wachs_II_nonpure_shellable1997}, vertex-decomposability implies shellability. Assume there is a shedding sequence~$S$ for~$(M_r)^{*2}_\Delta$. Consider only the deletions and let~$s_0\in S$ be the first vertex to be deleted from block~$r$ of~$(M_r)^{*2}_\Delta$. Let $M'$ be the complex given by successively deleting all vertices up to but not including~$s_0$. By symmetry we may assume that~$s_0=(w_1,2)$.  Let~$d$ be the dimension of $M'$.  Consider the link~$M' / s_0$. A facet of the link has dimension~$d-1$ and uses one vertex less in the second row than a $d$-dimensional facet of~$M'$. It cannot use the vertices~$(w_1,1)$ or~$(w_1,2)$. Let~$A$ be a facet of~$M' / s_0$ that uses the vertices~$(w_i,1)$ for $i=2,\dots,r$ in  the first row of block~$r$. Then the vertices~$(w_i,2)$ for $i=1,\dots,r$  in the second row of block~$r$ cannot be used by~$A$, since they are either deleted ($i=1$) or ``blocked'' ($i>1$). Now consider the deletion~$M'\setminus s_0$.  It has dimension~$d$. None of its facets use the vertex~$(w_1,2)$ and some of its facets have dimension~$d-1$. In fact~$A$ is a facet of~$M'/s_0$ of dimension~$d-1$. Hence~$A$ is a facet of both the link and the deletion of~$M'$ with respect to~$s_0$, thus violating the definition of vertex-decomposability \cite[Def.\,11.1]{bjorner_wachs_II_nonpure_shellable1997}.
\end{proof}

{\setstretch{1.2}
%\bibliography{references.bib}{}

\begin{thebibliography}{10}

\bibitem{baclawski_CM_ordered_sets1980}
Kenneth Baclawski.
\newblock Cohen--{M}acaulay ordered sets.
\newblock {\em J. Algebra}, 63(1):226--258, 1980.

\bibitem{bajmoczy_barany_topo_radon1979}
Ervin~G. Bajm\'oczy and Imre B\'ar\'any.
\newblock On a common generalization of {B}orsuk's and {R}adon's theorem.
\newblock {\em Acta Math. Acad. Sci. Hungar.}, 34(3-4):347--350 (1980), 1979.

\bibitem{barany_blagojevic_ziegler_tverbergs_thm_at_50_2016}
Imre B\'{a}r\'{a}ny, Pavle V.~M. Blagojevi\'{c}, and G\"{u}nter~M. Ziegler.
\newblock Tverberg's theorem at 50: {E}xtensions and counterexamples.
\newblock {\em Notices Amer. Math. Soc.}, 63(7):732--739, 2016.

\bibitem{barany_kalai_meshulam2016}
Imre B{\'a}r{\'a}ny, Gil Kalai, and Roy Meshulam.
\newblock {A Tverberg type theorem for matroids}.
\newblock Preprint, July 2016, revised November 2016, 6~pages,
  \href{https://arxiv.org/abs/1607.01599}{arXiv:1607.01599v2}; to appear in
  ``Journey Through Discrete Mathematics. A Tribute to Ji\v{r}\'{\i}
  Matou\v{s}ek'' (M. Loebl, J. Ne\v{s}et\v{r}il, R. Thomas, eds.), Springer
  2017.

\bibitem{barany_shlosman_topol_tverberg1981}
Imre B{\'{a}}r{\'{a}}ny, Senya~B. Shlosman, and Andr{\'{a}}s Sz{\H{u}}cs.
\newblock On a topological generalization of a theorem of {T}verberg.
\newblock {\em J. London Math. Soc. (2)}, 23(1):158--164, 1981.

\bibitem{bjorner_shellable_CM_posets1980}
Anders Bj\"orner.
\newblock Shellable and {C}ohen--{M}acaulay partially ordered sets.
\newblock {\em Trans. Amer. Math. Soc.}, 260(1):159--183, 1980.

\bibitem{bjorner_matroid_applications_1992}
Anders Bj\"orner.
\newblock The homology and shellability of matroids and geometric lattices.
\newblock In Neil White, editor, {\em Matroid applications}, chapter~7, pages
  226--283. Cambridge University Press, Cambridge, 1992.

\bibitem{bjorner_topological_methods_chapter1995}
Anders Bj\"orner.
\newblock Topological methods.
\newblock In {\em Handbook of combinatorics, {V}ol. 2}, pages 1819--1872.
  Elsevier Sci. B. V., Amsterdam, 1995.

\bibitem{bjorner_wachs_on_lex_shell_posets1983}
Anders Bj\"orner and Michelle~L. Wachs.
\newblock On lexicographically shellable posets.
\newblock {\em Trans. Amer. Math. Soc.}, 277(1):323--341, 1983.

\bibitem{bjorner_wachs_I_nonpure_shellable1996}
Anders Bj\"orner and Michelle~L. Wachs.
\newblock Shellable nonpure complexes and posets. {I}.
\newblock {\em Trans. Amer. Math. Soc.}, 348(4):1299--1327, 1996.

\bibitem{bjorner_wachs_II_nonpure_shellable1997}
Anders Bj\"orner and Michelle~L. Wachs.
\newblock Shellable nonpure complexes and posets. {II}.
\newblock {\em Trans. Amer. Math. Soc.}, 349(10):3945--3975, 1997.

\bibitem{blagojevic_blagojevic_mcclary_equi_tian_jordan_curve}
Pavle V.~M. Blagojevi\'c, Aleksandra S.~Dimitrijevi\'c Blagojevi\'c, and John
  McCleary.
\newblock Equilateral triangles on a {J}ordan curve and a generalization of a
  theorem of {D}old.
\newblock {\em Topology Appl.}, 156(1):16--23, 2008.

\bibitem{frick_barycenters_and_counterexamples2015}
Pavle V.~M. Blagojevi\'c, Florian Frick, and G\"unter~M. Ziegler.
\newblock Counterexamples to the {Topological Tverberg Conjecture} and other
  applications of the constraint method.
\newblock Preprint, 6 pages, October 2015,
  \href{http://arxiv.org/abs/1510.07984}{arXiv:1510.07984}; \emph{J. European
  Math. Soc.}, to appear.

\bibitem{blagojevic_frick_ziegler_tverberg_plus_constraints2014}
Pavle V.~M. Blagojevi\'c, Florian Frick, and G\"unter~M. Ziegler.
\newblock Tverberg plus constraints.
\newblock {\em Bull. Lond. Math. Soc.}, 46(5):953--967, 2014.

\bibitem{blagojevic_lueck_ziegler_equiv_top_config_spaces2015}
Pavle V.~M. Blagojevi\'{c}, Wolfgang L\"{u}ck, and G\"{u}nter~M. Ziegler.
\newblock Equivariant topology of configuration spaces.
\newblock {\em J. Topol.}, 8(2):414--456, 2015.

\bibitem{blagojevic_ziegler_beyond_borsuk_ulam2017}
Pavle V.~M. Blagojevi\'{c} and G{\"u}nter~M. Ziegler.
\newblock Beyond the {B}orsuk--{U}lam theorem: {T}he topological {T}verberg
  story.
\newblock Preprint, May 2016, 34 pages,
  \href{http://arxiv.org/abs/1605.07321}{arXiv:1605.07321}, revised version,
  December 2016, 36 pages; to appear in ``Journey Through Discrete Mathematics.
  A Tribute to Ji\v{r}\'{\i} Matou\v{s}ek'' (M. Loebl, J. Ne\v{s}et\v{r}il, R.
  Thomas, eds.), Springer 2017.

\bibitem{dold_simple_proofs_borsuk1983}
Albrecht Dold.
\newblock Simple proofs of some {B}orsuk--{U}lam results.
\newblock In H.~R. Miller and S.~B. Priddy, editors, {\em Proc. Northwestern
  Homotopy Theory Conf.}, volume~19 of {\em Contemp. Math.}, pages 65--69,
  1983.

\bibitem{husseini_ideal_val_index1988}
Edward~R. Fadell and Sufian~Y. Husseini.
\newblock An ideal-valued cohomological index theory with applications to
  {B}orsuk--{U}lam and {B}ourgin--{Y}ang theorems.
\newblock {\em Ergodic Theory Dynam. Systems}, 8$^{\ast}$:73--85, 1988.

\bibitem{frick_counterexamples2015}
Florian Frick.
\newblock Counterexamples to the topological {T}verberg conjecture.
\newblock {\em Oberwolfach Reports}, 12(1):318--322, 2015.

\bibitem{friedman_hanlon_betti_num_chessboards1998}
Joel Friedman and Phil Hanlon.
\newblock On the {B}etti numbers of chessboard complexes.
\newblock {\em J. Algebraic Combin.}, 8(2):193--203, 1998.

\bibitem{goaoc_mabillard_patak_wagner_etc_heawood_ineq2016}
Xavier Goaoc, Isaac Mabillard, Pavel Pat{\'a}k, Zuzana Pat{\'a}kov{\'a}, Martin
  Tancer, and Wagner Uli.
\newblock On generalized {H}eawood inequalities for manifolds: {A} van
  {K}ampen--{F}lores-type nonembeddability result.
\newblock Preprint \href{https://arxiv.org/abs/1610.09063}{arXiv:1610.09063},
  October 2016.

\bibitem{goff_Klee_Novik_Balanced_complexes_without_large2011}
Michael Goff, Steven Klee, and Isabella Novik.
\newblock Balanced complexes and complexes without large missing faces.
\newblock {\em Ark. Mat.}, 49(2):335--350, 2011.

\bibitem{juhnke-kubitzke_murai_balanced_generalized_lower_bound2015}
Martina Juhnke-Kubitzke and Satoshi Murai.
\newblock {Balanced generalized lower bound inequality for simplicial
  polytopes}.
\newblock Preprint \href{https://arxiv.org/abs/1503.06430}{arXiv:1503.06430},
  March 2015.

\bibitem{mabillard_wagner_elim_I_2015}
Isaac Mabillard and Uli Wagner.
\newblock Eliminating higher-multiplicity intersections, {I. A Whitney} trick
  for {Tverberg}-type problems.
\newblock Preprint \href{http://arxiv.org/abs/1508.02349}{arXiv:1508.02349},
  August 2015.

\bibitem{matousek_using_BU_2nd_2003}
Ji\v{r}\'{i} Matou\v{s}ek.
\newblock {\em Using the {B}orsuk--{U}lam theorem}.
\newblock Universitext. Springer-Verlag, Berlin, 2003.

\bibitem{mnukhin_saturated_balanced2005}
Valerij~B. Mnukhin and Johannes Siemons.
\newblock Saturated simplicial complexes.
\newblock {\em J. Combin. Theory Ser. A}, 109(1):149--179, 2005.

\bibitem{oxley_matroid_theory_2011}
James Oxley.
\newblock {\em Matroid Theory}, volume~21 of {\em Oxford Graduate Texts in
  Mathematics}.
\newblock Oxford University Press, Oxford, second edition, 2011.

\bibitem{ozaydin_1987}
Murad {\"O}zaydin.
\newblock Equivariant maps for the symmetric group.
\newblock Preprint 1987, 17 pages,
  \url{http://digital.library.wisc.edu/1793/63829}.

\bibitem{patak_tverberg_matroids2017}
Pavel Pat{\'a}k.
\newblock {Tverberg type theorems for matroids}.
\newblock Preprint \href{https://arxiv.org/abs/1702.08170#}{arXiv:1702.08170},
  2017.

\bibitem{provan_billera_vertex_decom1980}
J.~Scott Provan and Louis~J. Billera.
\newblock Decompositions of simplicial complexes related to diameters of convex
  polyhedra.
\newblock {\em Math. Oper. Res.}, 5(4):576--594, 1980.

\bibitem{rota_foundations_comb_theory_1_1964}
Gian-Carlo Rota.
\newblock On the foundations of combinatorial theory. {I}. {T}heory of
  {M}\"obius functions.
\newblock {\em Z. Wahrscheinlichkeitstheorie und Verw. Gebiete}, 2:340--368,
  1964.

\bibitem{sarkaria_a_generalized_vk_flores1991}
Karanbir~S. Sarkaria.
\newblock A generalized van {K}ampen--{F}lores theorem.
\newblock {\em Proc. Amer. Math. Soc.}, 111(2):559--565, 1991.

\bibitem{shapiro_obstructions_I_1957}
Arnold Shapiro.
\newblock Obstructions to the imbedding of a complex in a {E}uclidean space.
  {I}. {T}he first obstruction.
\newblock {\em Ann. of Math. (2)}, 66:256--269, 1957.

\bibitem{smale_vietoris_mapping1957}
Stephen Smale.
\newblock A {V}ietoris mapping theorem for homotopy.
\newblock {\em Proc. Amer. Math. Soc.}, 8:604--610, 1957.

\bibitem{stanley_balanced_cohen_macaulay1979}
Richard~P. Stanley.
\newblock Balanced {C}ohen--{M}acaulay complexes.
\newblock {\em Trans. Amer. Math. Soc.}, 249(1):139--157, 1979.

\bibitem{volovikov_on_a_top_general1996}
Aleksei~Y. Volovikov.
\newblock On a topological generalization of {T}verberg's theorem.
\newblock {\em Mat. Zametki}, 59(3):454--456, 1996.

\bibitem{ziegler_shelling_chessboards1994}
G{\"u}nter~M. Ziegler.
\newblock Shellability of chessboard complexes.
\newblock {\em Israel J. Math.}, 87(1-3):97--110, 1994.

\end{thebibliography}
\addcontentsline{toc}{chapter}{Bibliography}

}
\end{document}